\newcounter{master}
\numberwithin{master}{section}
\theoremstyle{plain}
\newtheorem{theorem}[master]{Theorem}
\newtheorem{proposition}[master]{Proposition}
\newtheorem{lemma}[master]{Lemma}
\newtheorem*{question}{Question}
\theoremstyle{definition}
\newtheorem*{definition}{Definition}
\newtheorem{claim}[master]{Claim}
\newtheorem*{notation}{Notation}
\theoremstyle{remark}
\newtheorem*{remark}{Remark}
\let\c@equation\c@master
\let\c@figure\c@master
\let\c@table\c@master
\def\@cite#1#2{[\textbf{#1}\if@tempswa , #2\fi]}
\def\@biblabel#1{[\textbf{#1}]}
\begin{document}

\title{On separably integrable symmetric convex bodies}

\author[V. Yaskin]{Vladyslav Yaskin}
\address{University of Alberta, Department of Mathematical and Statistical Sciences}
\email{vladyaskin@math.ualberta.ca}
\author[B. Zawalski]{Bartłomiej Zawalski}
\address{Polish Academy of Sciences, Institute of Mathematics}
\email{b.zawalski@impan.pl}
\thanks{The first author was supported in part by NSERC. The second author was supported by the Polish National Science Centre grant number 2020/02/Y/ST1/00072.}
\subjclass[2010]{Primary 52A20; Secondary 12D05, 13A18, 15A03, 52A38}
\keywords{Convex body, isotropic volume function, Fourier transform, valued fields}

\begin{abstract}
An infinitely smooth symmetric convex body $K\subset\mathbb R^d$ is called $k$-separably integrable, $1\leq k<d$, if its $k$-dimensional isotropic volume function $V_{K,H}(t)=\mathcal H^d(\{\boldsymbol x\in K:\mathrm{dist}(\boldsymbol x,H^\perp)\leq t\})$ can be written as a finite sum of products in which the dependence on $H\in\mathrm{Gr}(k,\mathbb R^d)$ and $t\in\mathbb R$ is separated. In this paper, we will obtain a complete classification of such bodies. Namely, we will prove that if $d-k$ is even, then $K$ is an ellipsoid, and if $d-k$ is odd, then $K$ is a Euclidean ball. This generalizes the recent classification of polynomially integrable convex bodies in the symmetric case.
\end{abstract}

\maketitle
\tableofcontents

\section{Introduction}\label{sec:01}

Newton argued in \emph{Principia} that the areas of caps of a planar convex body with infinitely smooth boundary are not expressible in terms of algebraic equations:

\begin{theorem}[{\cite[\S VI, Lemma XXVIII]{newton1723philosophiae}}]
There is no oval figure whose area, cut off by right lines at pleasure, can be universally found by means of equations of any number of finite terms and dimensions.
\end{theorem}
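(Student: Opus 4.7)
The plan is to argue by contradiction, along the lines of the classical Arnold--Vassiliev analytic-continuation strategy. Suppose that for some smooth oval $K\subset\mathbb{R}^2$ the area cut off by an arbitrary line is an algebraic function of its coefficients. Fix a point $P$ outside $K$ and restrict attention to the pencil of lines through $P$, parametrized rationally by $t=\tan(\theta/2)$, where $\theta$ is the angle with a fixed axis. The area on a chosen side of the line $\ell_t$ then becomes an algebraic function $f(t)$ of a single variable, so it extends to a finitely sheeted multivalued holomorphic function on $\mathbb{C}$ minus finitely many branch points, and the goal is to show that this function in fact has infinitely many branches.

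On the real axis the structure of $f$ is transparent: there are exactly four critical parameters $t_1,t_2,t_1',t_2'$ corresponding to the two tangent lines from $P$ (each contributing two directed parametrizations), and on the complementary open arcs $f$ is either locally constant (the line misses $K$) or strictly monotonic, interpolating between two constants whose difference is the total area $S$ of $K$. Smoothness and strict convexity of $\partial K$ yield the local asymptotic $f(t)-f(t_i)\sim c_i(t-t_i)^{3/2}$ at each critical parameter, so every branch point of $f$ is of square-root type and the elementary local monodromy is an involution.

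The essential step, and the main obstacle, is to promote this local information to an infinite monodromy statement. Analytically continuing $f$ in $\mathbb{C}\setminus\{t_1,t_2,t_1',t_2'\}$, each elementary monodromy around $t_i$ exchanges the branch on which $\ell_t$ genuinely crosses $K$ with a second branch that extends across the branch cut by the negated Puiseux tail. Iterating these involutions around branch points lying on opposite sides of $K$ produces a sequence of branches whose values at a fixed base point differ by nonzero integer multiples of $2S$, and so form an infinite set. Since an algebraic function admits only finitely many analytic continuations, this is the desired contradiction. The delicate part is to verify rigorously that the iterates produce genuinely distinct branches rather than collapsing into a finite orbit, and it is precisely here that the global geometric input of convexity, together with the positivity of the area $S$, enters in an essential way.
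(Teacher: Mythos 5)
The paper does not prove Newton's Lemma XXVIII; it is quoted verbatim as a historical antecedent, with a citation to the \emph{Principia} and no argument of any kind. So there is no internal proof to compare against. I will therefore assess your proposal on its own terms.

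Your outline is the classical Newton--Arnold monodromy argument, and the strategy is the correct one: restrict to the pencil of lines through a fixed exterior point to obtain a univariate algebraic function $f(t)$, use strict convexity and smoothness to obtain the Puiseux exponent $3/2$ at the two real tangency parameters, note that the local monodromies are involutions, and derive a contradiction by showing the monodromy group is infinite. Your identification of the exponent $3/2$ is correct \emph{provided} the curvature of $\partial K$ is nonvanishing at the tangent points (for an oval with an inflection-free but curvature-vanishing boundary point, the exponent changes and the argument needs a small modification); a complete proof should address this.

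The genuine gap is exactly the one you flag: you assert that composing the involutions around suitably chosen branch points translates a branch by $\pm 2S$ at the base point, but you give no mechanism, only the heuristic that the area "accumulates." This is the entire content of the theorem and cannot be left as a remark. The claim \emph{is} true, and here is the concrete way to close it. Parametrize $\partial K$ by $s\in\mathbb R/L\mathbb Z$ and write the two chord endpoints as $q_1(t),q_2(t)$ with continuous lifts $s_1(t),s_2(t)\in\mathbb R$. Writing the cut-off area via Green's theorem decomposes it as
$$f(t)=T\bigl(q_1(t),q_2(t)\bigr)+C\bigl(s_1(t)\bigr)-C\bigl(s_2(t)\bigr),$$
where $T$ is the signed area of the triangle spanned by the origin and the chord endpoints (algebraic, antisymmetric under $q_1\leftrightarrow q_2$), and $C(s)=\tfrac12\int_0^s(x\,dy-y\,dx)$ along $\partial K$ is the cumulative boundary integral, satisfying $C(s+L)=C(s)+S$. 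At one tangency the lifts coalesce ($s_1=s_2$), so the local monodromy is $(s_1,s_2)\mapsto(s_2,s_1)$ and $f\mapsto -f$; at the other tangency the lifts differ by exactly $L$, so the local monodromy is $(s_1,s_2)\mapsto(s_2+L,s_1-L)$ and $f\mapsto 2S-f$. Their composition is $f\mapsto f\pm 2S$, which has infinite order since $S>0$. This is precisely what you want, and it also makes transparent why the increments are exactly multiples of $2S$ rather than some unspecified nonzero quantities. Without an argument of this kind, your proposal is an outline rather than a proof; with it, the proof is complete and matches the modern reading of Newton's own reasoning.

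One further caution: you state that on some real arcs $f$ is locally constant because the line misses $K$. If one insists on the strong reading that $f$ is a \emph{single branch} of a single algebraic function defined on all of $\mathbb R$, then constancy on an open interval already forces $f$ to be globally constant and the theorem is vacuous. The correct, nontrivial reading is that $f$ is either only required to be algebraic on the wedge of lines meeting $K$, or that the continuous function $f$ on $\mathbb R$ is allowed to glue together different branches of the algebraic function across the tangency parameters. Either way, your monodromy argument is what is needed, but the exposition should make the intended hypothesis explicit so that the reader does not dismiss the problem with the degenerate one-line argument.
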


On the other hand, it was already known since the remarkable result of Archimedes that the volume cut off by a plane from a Euclidean ball in $\mathbb R^3$ depends algebraically on the plane. Further, it can be easily verified that the latter is true for any ellipsoid in any odd-dimensional space. It is, therefore, natural to ask to what extent can Newton's result be generalized.\\

Several related questions of this type were brought up by Arnold \cite[1987-14, 1988-13, 1990-27]{arnold2004arnold} in his famous seminar at Moscow State University. In 2015, Vassiliev \cite{https://doi.org/10.1112/blms/bdv002} solved the problem 1988-13 by showing that if $K\subset\mathbb R^d$ is a bounded domain in an even-dimensional space, then the volumes $V_{K,H}^\pm(t)$ cut off by a hyperplane parallel to $H\in\mathrm{Gr}(d-1,\mathbb R^d)$ at distance $ t\in\mathbb R$ from the origin are not algebraic functions of $H$ and $t$. Two years later, Agranovsky \cite{agranovsky2018complex} suggested a new direction for further research, introducing the concept of polynomial integrability.

\begin{definition}[{\cite[\S 2.1]{koldobsky2005fourier}}]
Let $K\subset\mathbb R^d$ be a convex body. For $\boldsymbol\xi\in\mathbb S^{d-1}$ we define the \emph{parallel section function} of $K$ by
$$A_{K,\boldsymbol\xi}(t)\colonequals\mathcal H^{d-1}(K\cap\{\langle\boldsymbol\xi\rangle^\perp+t\boldsymbol\xi\}),$$
where $\{\langle\boldsymbol\xi\rangle^\perp+t\boldsymbol\xi\}$ is the hyperplane perpendicular to $\boldsymbol\xi$ at distance $t$ from the origin.
\end{definition}

\begin{definition}[{\cite[Definition~1.1]{KOLDOBSKY2017876}}]
Let $K$ be a convex body in $\mathbb R^d$. Then $K$ is called \emph{polynomially integrable} if its parallel section function $A_{K,\boldsymbol\xi}(t)$ is a polynomial in $t$ on its support, i.e.,
$$A_{K,\boldsymbol\xi}(t)=\sum_{i=0}^na_i(\boldsymbol\xi)t^i,\quad a_i:\mathbb S^{d-1}\to\mathbb R.$$
\end{definition}

Agranovsky showed that if $K\subset\mathbb R^d$ is a domain with a smooth boundary in an even-dimensional space (the smoothness assumption was already known to be necessary), then it is not polynomially integrable \cite[Theorem~2]{agranovsky2018complex}. Equivalently, the volume $V_{K,H}^\pm(t)$ is not a polynomial in $t$. Since, on the one hand, the assumption of polynomial integrability imposes additional constraints on the dependence of $V_{K,H}^\pm(t)$ on $t$ but, on the other, removes all constraints on the dependence of $V_{K,H}^\pm(t)$ on $H$, his result has a slightly different flavor. Agranovsky also showed that in an odd-dimensional space, all polynomially integrable bodies must be convex \cite[Theorem~5]{agranovsky2018complex}. Their classification was completed shortly afterward by Koldobsky, Merkurjev, and Yaskin \cite{KOLDOBSKY2017876}, who proved the following theorem:

\begin{theorem}[{\cite[Theorem~3.7]{KOLDOBSKY2017876}}]\thlabel{thm:02}
Let $d$ be an odd positive integer. If $K$ is an infinitely smooth polynomially integrable convex body in $\mathbb R^d$, then $K$ is an ellipsoid.
\end{theorem}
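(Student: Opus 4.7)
The natural approach is Fourier-analytic, following Koldobsky's theory of sections. The key identity, valid for origin-symmetric infinitely smooth $K$ and for even $k$ with $0 \le k < d-1$, expresses the even-order Taylor coefficients of the parallel section function at the origin in terms of Fourier transforms of negative powers of the Minkowski functional:
$$A_{K,\xi}^{(k)}(0) = \frac{(-1)^{k/2}}{\pi(d-k-1)!}\bigl(\|\cdot\|_K^{-d+k+1}\bigr)^{\wedge}(\xi),$$
where the right-hand side is interpreted via the analytic continuation of the family $s \mapsto \|\cdot\|_K^s$ of homogeneous tempered distributions; odd-order coefficients vanish by central symmetry.

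Under the polynomial-integrability hypothesis, $A_{K,\xi}(t)$ is an even polynomial in $t$ supported on $[-h_K(\xi), h_K(\xi)]$, and its degree is at most $d-1$ (from the standard $(d-1)/2$-order vanishing at the boundary of the support, valid for smooth $K$ in odd dimension). Combined with the identity above, this yields an expansion in which every Taylor coefficient is a Fourier transform $(\|\cdot\|_K^{-d+2j+1})^{\wedge}(\xi)$ up to an explicit constant. Substituting $t = h_K(\xi)$ and invoking the endpoint vanishing $A_{K,\xi}(h_K(\xi)) = 0$ gives a single algebraic relation between $h_K(\xi)$ and the Fourier transforms $(\|\cdot\|_K^{-d+2j+1})^{\wedge}(\xi)$ for $0 \le 2j \le d-1$.

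The plan is then to extract from this relation that $\|x\|_K^2$ is a positive-definite quadratic form on $\mathbb R^d$, which is equivalent to $K$ being an ellipsoid. This should follow by matching homogeneity degrees — $h_K$ is homogeneous of degree $+1$ while $(\|\cdot\|_K^{-d+2j+1})^{\wedge}$ is homogeneous of degree $-2j-1$ — and applying Fourier inversion on each slice. The main obstacle I anticipate is the endpoint index $2j = d-1$, which involves the Fourier transform of $\|\cdot\|_K^{0}\equiv 1$; the naive formula produces a delta distribution at the origin, and only the analytic continuation of $s \mapsto \|\cdot\|_K^s$ recovers the correct finite, nonzero contribution needed to close the argument. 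A related technicality is ensuring that the polynomial-degree bound is uniform in $\xi$, so that the terminating Taylor expansion of $A_{K,\xi}$ holds globally on $\mathbb S^{d-1}$ rather than only on an open subset.
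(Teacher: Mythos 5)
Your approach is in the right family --- the cited proof by Koldobsky, Merkurjev, and Yaskin is Fourier-analytic and built on the derivative identity you quote (the $(d-k-1)!$ in your formula should be $(d-k-1)$, but that is cosmetic) --- yet both load-bearing steps are off. The degree bound $\deg_t A_{K,\xi} \le d-1$ is asserted ``from the standard $(d-1)/2$-order vanishing at the boundary of the support,'' but that reasoning runs the wrong way: vanishing to order $(d-1)/2$ at both endpoints only shows that $(h_K(\xi)^2 - t^2)^{(d-1)/2}$ divides the polynomial $A_{K,\xi}$, which gives a \emph{lower} bound $\deg \ge d-1$. The upper bound is a genuine lemma in the KMY paper and requires its own argument. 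Likewise, the proposed endgame --- plug $t = h_K(\xi)$ into the terminating Taylor expansion and then ``match homogeneity degrees'' --- cannot distinguish anything, because the $j$-th summand $(\|\cdot\|_K^{-d+2j+1})^{\wedge}(\xi)\,h_K(\xi)^{2j}$ is homogeneous of degree $(-2j-1)+2j=-1$ for \emph{every} $j$; there is no separation of scales. Once the degree bound is actually established, the working route is not the relation at the endpoint but the vanishing of a high-order derivative at the center: taking $k=d+1$ (even and $\ne d-1$) gives $A_{K,\xi}^{(d+1)}(0)=0$, so $(\|\cdot\|_K^{2})^{\wedge}$ vanishes on $\mathbb{S}^{d-1}$, hence by homogeneity (degree $-d-2$) is supported at the origin, hence $\|\cdot\|_K^2$ is a homogeneous degree-$2$ polynomial, i.e.\ a positive definite quadratic form.

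Separately, note a scope issue: Theorem 3.7 as stated carries no symmetry hypothesis, while both the Fourier identity you use and your appeal to the evenness of $A_{K,\xi}$ require $K$ to be origin-symmetric. The non-symmetric case in KMY is handled by a substantially more involved algebraic argument; indeed the present paper's own generalization is explicitly restricted to the symmetric setting for exactly this reason, so a proof proposal that silently assumes symmetry does not reach the cited statement.
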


\noindent In a recent work, Agranovsky, Koldobsky, Ryabogin, and Yaskin \cite{AGRANOVSKY2023127071} established a similar result in even-dimensional spaces, assuming that the parallel section function $A_{K,\boldsymbol\xi}(t)$ can be expressed in the form
$$A_{K,\boldsymbol\xi}(t)=P(\boldsymbol\xi,t)\sqrt{Q(\boldsymbol\xi,t)},$$
where $P,Q$ are polynomials in $t$ and $\deg Q=2$ \cite[Theorem~1.4]{AGRANOVSKY2023127071}.\\

For other developments on the problem, the reader is referred to \cite{inbook,Agranovsky2020,Agranovsky2022,Boman2021,Boman+2021+351+367,article1,Vassiliev2019,Vassiliev2020}.

\section{Statement of the result}

In this paper, we are going to significantly weaken the polynomial integrability condition. Before we do this, however, we need to introduce the notion of separable integrability.

\begin{definition}
Suppose that $X,Y$ are Hausdorff spaces and $A,B$ are subalgebras of $C(X,\mathbb R),C(Y,\mathbb R)$, respectively. The \emph{algebraic tensor product} $A\otimes B$ of $A,B$ is a subalgebra of $C(X\times Y,\mathbb R)$ generated by \emph{pure products} of the form
$$(a\otimes b)(x,y)\colonequals a(x)b(y),\quad a\in A,\ b\in B.$$
\end{definition}

\begin{definition}
A function in $C(X\times Y,\mathbb R)$ is called \emph{separable} if it is a finite sum of pure products (i.e., an element of the algebraic tensor product $C(X,\mathbb R)\otimes C(Y,\mathbb R)$). A function is called \emph{entangled} if it is not separable.
\end{definition}

Denote by $\mathrm{Gr}(k,\mathbb R^d)$ the \emph{Grassmann manifold} of all $k$-dimensional linear subspaces of $\mathbb R^d$.

\begin{definition}
Let $K\subset\mathbb R^d$ be a convex body. For a $k$-dimensional linear subspace $H\in\mathrm{Gr}(k,\mathbb R^d)$, $1\leq k<d$, we define the \emph{$k$-dimensional isotropic volume function} of $K$ by
$$V_{K,H}(t)=\mathcal H^d(\{\boldsymbol x\in K:\mathrm{dist}(\boldsymbol x,H^\perp)\leq t\}),$$
where $\{\boldsymbol x\in K:\mathrm{dist}(\boldsymbol x,H^\perp)\leq t\}$ is the intersection of $K$ with a $k$-dimensional right circular hypercylinder with base space $H$, axis $H^\perp$ and radius $t$.
\end{definition}

Being careful in making delicate decisions about the domain, we will intentionally define the key concept of separable integrability only locally.

\begin{definition}
A convex body $K\subset\mathbb R^d$ is called \emph{locally $k$-separably integrable}, $1\leq k<d$, if its $k$-dimensional isotropic volume function $V_{K,H}(t):\mathrm{Gr}(k,\mathbb R^d)\times[0,\infty)\to\mathbb R$ is separable in some open neighborhood $U$ of $\mathrm{Gr}(k,\mathbb R^d)\times\{0\}$, i.e.,
\begin{equation}\label{eq:01}V_{K,H}(t)=\sum_{i=0}^na_i(H)b_i(t),\quad a_i:\mathrm{Gr}(k,\mathbb R^d)\to\mathbb R,\ b_i:[0,\infty)\to\mathbb R\end{equation}
for every $(H,t)\in U$.
\end{definition}

\begin{remark}
By definition, if a convex body $K\subseteq\mathbb R^d$ is polynomially integrable, then it is also locally $1$-separably integrable. Indeed, for $t$ such that the interval $[-t,+t]$ is contained in the support of $A_{K,\boldsymbol\xi}$ we have
$$V_{K,\langle\boldsymbol\xi\rangle}(t)=\int_{[-t,+t]}A_{K,\boldsymbol\xi}(r)\;\mathrm dr=\sum_{i=0}^na_i(\boldsymbol\xi)\left(\int_{[-t,+t]}r^i\;\mathrm dr\right).$$
\end{remark}

\begin{remark}
If $d-k$ is even and $K\subseteq\mathbb R^d$ is an ellipsoid, then it is locally $k$-separably integrable. Indeed, for $t$ such that the ball $\mathbb B^d(t)\cap H$ is contained in the projection $K\mid H$ we have
\begin{align*}
V_{K,H}(t)&=\int_{\mathbb B^d(t)\cap H}\mathcal H^{d-k}(K\cap\{H^\perp+\boldsymbol u\})\;\mathrm d\boldsymbol u\\
&=\int_{\mathbb S^{d-1}\cap H}\int_{[0,t]}r^{k-1}\mathcal H^{d-k}(K\cap\{H^\perp+r\boldsymbol\theta\})\;\mathrm dr\;\mathrm d\boldsymbol\theta\\
&=\int_{\mathbb S^{d-1}\cap H}\int_{[0,t]}r^{k-1}A_{K\cap\langle H^\perp,\boldsymbol\theta\rangle,\boldsymbol\theta}(r)\;\mathrm dr\;\mathrm d\boldsymbol\theta.
\end{align*}
Now, since $K\cap\langle H^\perp,\boldsymbol\theta\rangle$ is an ellipsoid in an odd-dimensional space $\langle H^\perp,\boldsymbol\theta\rangle$, it is polynomially integrable. It follows that
$$V_{K,H}(t)=\int_{\mathbb S^{d-1}\cap H}\int_{[0,t]}r^{k-1}\sum_{i=0}^{d-k}a_{H,i}(\boldsymbol\theta)r^i\;\mathrm dr\;\mathrm d\boldsymbol\theta=\sum_{i=0}^{d-k}\left(\int_{\mathbb S^{d-1}\cap H}a_{H,i}(\boldsymbol\theta)\;\mathrm d\boldsymbol\theta\right)\left(\int_{[0,t]}r^{k+i-1}\;\mathrm dr\right).$$
\end{remark}

Our main result is the following theorem:

\begin{theorem}\thlabel{thm:01}
Let $K\subseteq\mathbb R^d$ be an origin-symmetric convex body with infinitely smooth boundary $\partial K$. If $K$ is locally $k$-separably integrable, then $d-k$ is even and $K$ is an ellipsoid or $d-k$ is odd and $K$ is a Euclidean ball.
\end{theorem}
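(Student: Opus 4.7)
The plan is to extract, from local $k$-separable integrability, a finite-dimensional algebraic constraint on the Taylor expansion of $V_{K,H}(t)$ at $t=0$, and then convert this, via Koldobsky-style Fourier analysis of powers of the Minkowski functional $\|\cdot\|_K$, into the desired rigidity of $K$.

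First, after a routine normalization---choosing the $a_i$'s linearly independent as functions on $\mathrm{Gr}(k,\mathbb R^d)$ and inverting a square linear system in values of $V_{K,H}(t)$ to conclude that the $b_i$'s are necessarily smooth on $[0,\varepsilon)$---the central symmetry of $K$ and the smoothness of $\partial K$ yield the intrinsic Taylor expansion
\[
V_{K,H}(t) \,=\, \sum_{m \ge 0} P_m(H)\, t^{k+2m},
\]
obtained from $V_{K,H}(t) = \int_{B^k(t) \cap H} f_H(u)\, du$ with the even function $f_H(u) = \mathcal H^{d-k}(K \cap (H^\perp + u))$. Matching this against $\sum_{i=0}^n a_i(H)b_i(t)$ forces $P_m \in \operatorname{span}(a_0,\ldots,a_n)$ for every $m \ge 0$, so only finitely many of the $P_m$'s are linearly independent as functions on the Grassmannian. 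A direct computation then expresses $P_m(H)$ as a spherical mean over $S^{k-1}\cap H$ of the $(2m)$-th derivative at zero of the parallel section function $A_{K\cap L,\theta}$ inside the central section $K \cap L$, where $L = \langle H^\perp, \theta\rangle$; by the classical Koldobsky Fourier formula, this derivative equals, up to an explicit constant, the Fourier transform of $\|\cdot\|_{K\cap L}^{-(d-k)+2m+1}$ evaluated on the line $\mathbb R\theta$.

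I would then split on the parity of $d-k$. When $d-k$ is even, the exponent $-(d-k)+2m+1$ is odd and the relevant Fourier transforms are regular; the finite-dimensional span of $\{P_m\}$ should pull back through the Koldobsky formula to force each $A_{K\cap L,\theta}(r)$ to be polynomial in $r$, so every central section $K\cap L$ of odd dimension $d-k+1$ is polynomially integrable. Applying \thref{thm:02} inside each such $L$ then gives that every such section of $K$ is an ellipsoid; since the subspaces $L = \langle H^\perp, \theta\rangle$ sweep out all central $(d-k+1)$-dim subspaces, a classical rigidity result for convex bodies all of whose central sections of a fixed intermediate dimension are ellipsoids finishes the ellipsoid case. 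When $d-k$ is odd, the exponent is even and the Fourier transforms carry logarithmic singularities; here one expects the finite-dimensional span to force each $P_m$ to be constant on the Grassmannian, so that $V_{K,H}(t)$ depends only on $t$, and a Funk-type uniqueness theorem for $(d-k)$-dim central cross-sections concludes that $K$ is a Euclidean ball.

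The main obstacle is this third step: translating a mere finite-dimensional linear span of the $\{P_m\}$ into either polynomial structure (ellipsoid case) or full rotational invariance (ball case). This rigidity lies strictly between separability and polynomial integrability, so the classical machinery of \cite{KOLDOBSKY2017876} and \cite{AGRANOVSKY2023127071} does not apply off the shelf, and there is no evident handle for an induction on $m$. The valued-field machinery signaled by the paper's keywords appears to be the engine here, recasting the finite-dimensional linear relations among the $P_m$'s as divisibility conditions in a suitable function field over $\mathbb R$, from which the algebraic form of $\|\cdot\|_K$ can be unpacked.
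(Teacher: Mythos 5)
Your opening is on target and matches the paper: normalize so that $a_1,\ldots,a_n$ are linearly independent (\thref{lem:03}), expand $V_{K,H}(t)\sim\sum_m P_m(H)t^{k+2m}$ using origin-symmetry, and observe that the $P_m$ span a finite-dimensional subspace of functions on the Grassmannian. But the step you then propose---that ``the finite-dimensional span of $\{P_m\}$ should pull back through the Koldobsky formula to force each $A_{K\cap L,\theta}(r)$ to be polynomial in $r$''---does not follow, and the word ``should'' is hiding the entire difficulty of the theorem. Finite-dimensionality of $\mathrm{span}\{P_m\}$ only says that these Taylor coefficients satisfy a linear recurrence with scalar coefficients \emph{as functions on the Grassmannian}; it does not truncate the Taylor series of the parallel section function of any individual central slice. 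If it did, you could conclude in one line via \thref{thm:02} and would never meet the degenerate solutions the paper must explicitly rule out in its final subsection, namely bodies with $\|\boldsymbol x\|_K^{2m}$ a homogeneous polynomial with $m>1$. Similarly, for $d-k$ odd your plan needs the $P_m$ to be \emph{constant} on $\mathrm{Gr}(k,\mathbb R^d)$---a one-dimensional span---which finite-dimensionality does not give you.

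The paper bypasses sectionwise polynomial integrability entirely. Using \thref{cor:01} and the homogeneity of the Fourier transform, it converts each scalar linear recurrence among the Taylor coefficients into an identity $\sum_i\tilde c_{s,i}\Delta^i\|\cdot\|_K^{-d+2s+2i+k}=P_s(\boldsymbol x)$ with $P_s$ a homogeneous polynomial, and via the Fa\`a di Bruno-type expansion in \thref{clm:01} this becomes a family of polynomial equations for $\zeta:=\|\boldsymbol x\|_K^{-2}$ over the rational function field $\mathbb R(\boldsymbol x)$. Passing to the companion matrix $[T_\zeta]_{\mathcal B}$ of the minimal polynomial $\mu_\zeta$ and comparing the linear-in-$s$ growth of $p$-adic and degree valuations of $P_s$ against that of the entries of $[T_\zeta]_{\mathcal B}^s$ forces $\mu_{\zeta,m-i}=u_{m-i}\mu_{\zeta,0}^{i/m}$, hence $\|\boldsymbol x\|_K^{2m}$ is a homogeneous polynomial $h$. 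A final graded-algebra argument with the Laplacian (\thref{clm:03}, \thref{clm:04}) filters out the incidental solutions: if $d-k$ is odd, or if $d-k$ is even and $m>1$, then $K$ is forced to be a Euclidean ball, while $d-k$ even and $m=1$ gives a hyperquadric. You correctly sense that valuations on function fields are the engine and that the finite-dimensional linear relations become divisibility constraints, but the intermediate reduction to polynomial integrability of sections is unsupported, and without it the rest of your plan has nothing to run on.
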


Note that none of the results mentioned in \S\ref{sec:01} requires $K$ to be symmetric. Therefore \thref{thm:01} generalizes \thref{thm:02} only under this additional assumption. Unfortunately, exactly as in \cite{KOLDOBSKY2017876}, the non-symmetric case is essentially more difficult and requires more involved algebraic arguments. Nevertheless, \thref{thm:01} seems to indicate the crux of polynomial integrability. Namely, it is not so much the rigidity of polynomials that makes \thref{thm:02} true as the fact that the linear space of polynomials of fixed degree is finite-dimensional. Interestingly enough, \cite[Theorem~1.4]{AGRANOVSKY2023127071} generalizes \thref{thm:02} in a completely different way than \thref{thm:01}. On the one hand, it still needs the rigidity of polynomials, but on the other, it is more flexible in terms of the linear structure. This phenomenon prompts us to ask the following question:

\begin{question}
Let $K$ be a bounded domain in $\mathbb R^d$ with an infinitely smooth boundary $\partial K$. If the $k$-dimensional isotropic volume function $V_{K,H}(t)$ can be locally expressed in the form
$$V_{K,H}(t)=\Phi(a_1(H),a_2(H),\ldots,a_m(H),b_1(t),b_2(t),\ldots,b_n(t))$$
on some open neighborhood of $\mathrm{Gr}(k,\mathbb R^d)\times\{0\}$, where $\Phi:\mathbb R^{m+n}\to\mathbb R$ is algebraic and $a_i:\mathrm{Gr}(k,\mathbb R^d)\to\mathbb R$, $b_i:[0,\infty)\to\mathbb R$ are smooth, is $K$ necessarily an ellipsoid?
\end{question}

\noindent It contains all the aforementioned results, including ours. To the authors' best knowledge, no counterexample is known so far.

\begin{remark}
By the superposition theorem of Kolmogorov \cite{MR0111809}, there always exist monotonic increasing functions $a_i\in C(\mathrm{Gr}(k,\mathbb R^d),\mathbb R)$, $b_i\in C([0,\infty),\mathbb R)$ with the property that each continuous function $V_{K,H}(t)\in C(\mathrm{Gr}(k,\mathbb R^d)\times[0,\infty),\mathbb R)$ can be (locally) represented in the form
$$V_{K,H}(t)=\sum_{i=1}^5\phi_i(a_i(H)+b_i(t))$$
with functions $\phi_i\in C(\mathbb R,\mathbb R)$. Therefore the question is not really about the separability of variables or even finiteness of the representation, but rather if the individual functions in such a representation can be made infinitely smooth or even algebraic. This type of question is already much more delicate, as in Kolmogorov's proof, there is an overt rivalry between the smoothness of $a_i$ and $b_i$ and the smoothness of $\phi_i$. This also indicates why the initial smoothness assumption was crucial. However, since we do not insist that all the functions $\phi_i$ should be one-parameter, we seem to avoid the basic difficulty (cf. \cite{MR0165138}). After all, our question may be considered as yet another (local) variant of the superposition problem for a particular class of functions arising as $k$-dimensional isotropic volumes of smooth convex bodies.
\end{remark}

\section{Definitions and basic concepts}

We will begin with a brief reminder of the basic concepts and definitions that we will frequently use in the rest of the work.

\begin{notation}
Throughout the text, we will use the multi-index notation. A $d$-dimensional \emph{multi-index} is a $d$-tuple $\boldsymbol\alpha=(\alpha_1,\alpha_2,\ldots,\alpha_d)$ of non-negative integers. For multi-indices $\boldsymbol\alpha,\boldsymbol\beta\in\mathbb N^d$ and a vector $\boldsymbol x\in\mathbb R^d$ we define the partial order
$$\boldsymbol\alpha\leq\boldsymbol\beta\iff\alpha_i\leq\beta_i\ \forall i\in\{1,2,\ldots,d\},$$
the absolute value
$$|\boldsymbol\alpha|\colonequals\alpha_1+\alpha_2+\ldots+\alpha_d,$$
the power
$$\boldsymbol x^{\boldsymbol\alpha}\colonequals x_1^{\alpha_1}x_2^{\alpha_2}\cdots x_d^{\alpha_d}$$
and the high-order partial derivative
$$\partial^{\boldsymbol\alpha}\colonequals\partial_1^{\alpha_1}\partial_2^{\alpha_2}\cdots\partial_d^{\alpha_d}.$$
\end{notation}

\subsection{Fourier analysis}

We adopt the notation and definitions from \cite{koldobsky2005fourier}.

\begin{definition}[{\cite[\S 2.2, \S 2.1]{koldobsky2005fourier}}]
A closed compact set $K\subset\mathbb R^d$ with a non-empty interior is called a \emph{convex body} if it contains the line segment connecting any two of its points. If a convex body $K$ is origin-symmetric, then its \emph{Minkowski functional} defined by
$$\|\boldsymbol x\|_K\colonequals\min\{a\geq 0\mid\boldsymbol x\in aK\}$$
is a norm on $\mathbb R^d$.
\end{definition}

It is easy to see that the Minkowski functional is a homogeneous function of degree $1$ on $\mathbb R^d$ and that
\begin{equation}\label{eq:16}K=\{\boldsymbol x\in\mathbb R^d\mid\|\boldsymbol x\|_K\leq 1\}.\end{equation}
Also, it follows from the definition that the origin is an interior point of every symmetric convex body, so the Minkowski functional is strictly positive outside the origin.

\begin{notation}[{\cite[\S 2.5]{koldobsky2005fourier}}]
We denote by $\mathcal S(\mathbb R^d)$ the space of complex-valued functions $\phi\in C^\infty(\mathbb R^d)$ converging to zero at infinity together with all their derivatives faster than any negative power of $\|\cdot\|_2$. Elements of the space $\mathcal S(\mathbb R^d)$ will be called \emph{test functions}. As usual, we denote by $\mathcal S(\mathbb R^d)'$ the space of continuous linear functionals on $\mathcal S(\mathbb R^d)$, which we call \emph{distributions} over $\mathcal S(\mathbb R^d)$.
\end{notation}

\begin{definition}[{\cite[\S 2.5]{koldobsky2005fourier}}]
We define the \emph{Fourier transform} of a function $\phi\in L_1(\mathbb R^d)$ by
$$\mathcal F\phi(\boldsymbol\xi)\colonequals\hat\phi(\boldsymbol\xi)\colonequals\int_{\mathbb R^d}\phi(\boldsymbol x)e^{-i(\boldsymbol x,\boldsymbol\xi)}\;\mathrm d\boldsymbol x,\quad\boldsymbol\xi\in\mathbb R^d.$$
Further, we define the action of a complex-valued function $f\in L_1(\mathbb R^d)$ on a test function $\phi$ as
$$\langle f,\phi\rangle\colonequals\int_{\mathbb R^d}f(\boldsymbol x)\phi(\boldsymbol x)\;\mathrm d\boldsymbol x$$
and finally, we define the Fourier transform of a distribution $f$ by
$$\langle\hat f,\phi\rangle=\langle f,\hat\phi\rangle.$$
\end{definition}

For any multi-index $\boldsymbol\alpha\in\mathbb N^d$, the derivative of the order $\boldsymbol\alpha$ of a distribution $f$ is defined by
$$\langle\partial^{\boldsymbol\alpha}f,\phi\rangle=(-1)^{|\boldsymbol\alpha|}\langle f,\partial^{\boldsymbol\alpha}\phi\rangle.$$
The Fourier transform is related to differentiation as follows:
\begin{equation}\label{eq:11}(\partial^{\boldsymbol\alpha}f)^\wedge=i^{|\boldsymbol\alpha|}\boldsymbol x^{\boldsymbol\alpha}f^\wedge.\end{equation}

Denote by $\mathrm{St}(k,\mathbb R^d)$ the \emph{Stiefel manifold} of all orthonormal $k$-frames in $\mathbb R^d$ (i.e., the set of ordered orthonormal $k$-tuples of vectors in $\mathbb R^d$).

\begin{definition}[{\cite[\S 3.5]{koldobsky2005fourier}}]
Let $K\subset\mathbb R^d$ be a convex body. For an orthonormal $k$-frame $\Xi\in \mathrm{St}(k,\mathbb R^d)$ we define the \emph{$(d-k)$-dimensional parallel section function} of $K$ by
$$A_{K,\Xi}(\boldsymbol t)=\mathcal H^{d-k}(K\cap\{\langle\Xi\rangle^\perp+t_1\boldsymbol\xi_1+t_2\boldsymbol\xi_2+\ldots+t_k\boldsymbol\xi_k\}),\quad\boldsymbol t\in\mathbb R^k.$$
\end{definition}

The following result expresses the derivatives of the parallel section function $A_{K,\Xi}$ in terms of the Fourier transform of powers of the Minkowski functional.

\begin{lemma}[{\cite[Theorem~3.26]{koldobsky2005fourier}}]\thlabel{lem:01}
Let $K$ be an infinitely smooth origin-symmetric convex body in $\mathbb R^d$, $1\leq k<d$. Then for every orthonormal $k$-frame $\Xi$ in $\mathbb R^d$ and every $s\in\mathbb N$, $s\neq(d-k)/2$,
$$\Delta^sA_{K,\Xi}(\boldsymbol 0)=\frac{(-1)^s}{2^k\pi^k(d-2s-k)}\int_{\mathbb S^{d-1}\cap\langle\Xi\rangle}(\|\cdot\|_K^{-d+2s+k})^\wedge(\boldsymbol\theta)\;\mathrm d\boldsymbol\theta,$$
where $\Delta\colonequals\sum_{i=1}^d\partial^2/\partial x_i^2$ is the Laplace operator on $\mathbb R^d$.
\end{lemma}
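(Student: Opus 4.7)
The plan is to follow the standard Fourier-analytic route. The first step is to write $A_{K,\Xi}$, viewed as a function of $\boldsymbol t\in\mathbb R^k$, as the integral of $\mathbf 1_K$ along the affine copies of $\langle\Xi\rangle^\perp$:
$$A_{K,\Xi}(\boldsymbol t)=\int_{\langle\Xi\rangle^\perp}\mathbf 1_K(\boldsymbol y+t_1\boldsymbol\xi_1+\cdots+t_k\boldsymbol\xi_k)\;\mathrm d\boldsymbol y.$$
A direct computation (or a projection-slice argument) then gives the key identity
$$\widehat{A_{K,\Xi}}(\boldsymbol z)=\widehat{\mathbf 1_K}(z_1\boldsymbol\xi_1+\cdots+z_k\boldsymbol\xi_k),$$
so that the $k$-dimensional Fourier transform of the section function in the $\boldsymbol t$ variable is the restriction of the $d$-dimensional Fourier transform of $\mathbf 1_K$ to the subspace $\langle\Xi\rangle$.

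Combining this with the differentiation rule (\ref{eq:11}) and Fourier inversion at $\boldsymbol t=\boldsymbol 0$ yields
$$\Delta^sA_{K,\Xi}(\boldsymbol 0)=\frac{(-1)^s}{(2\pi)^k}\int_{\mathbb R^k}\|\boldsymbol z\|^{2s}\widehat{\mathbf 1_K}(z_1\boldsymbol\xi_1+\cdots+z_k\boldsymbol\xi_k)\;\mathrm d\boldsymbol z.$$
Switching to polar coordinates inside $\mathbb R^k\cong\langle\Xi\rangle$ converts this into
$$\Delta^sA_{K,\Xi}(\boldsymbol 0)=\frac{(-1)^s}{2^k\pi^k}\int_{\mathbb S^{d-1}\cap\langle\Xi\rangle}\left(\int_0^\infty r^{2s+k-1}\widehat{\mathbf 1_K}(r\boldsymbol\theta)\;\mathrm dr\right)\mathrm d\boldsymbol\theta,$$
so it remains to relate the inner radial integral to $(\|\cdot\|_K^{-d+2s+k})^\wedge$.

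For this I would start from the elementary pointwise identity
$$\|\boldsymbol\theta\|_K^{-\alpha}=\alpha\int_0^\infty r^{\alpha-1}\mathbf 1_K(r\boldsymbol\theta)\;\mathrm dr,$$
apply the Fourier transform in $\boldsymbol\theta$ to both sides, and use the scaling rule $\widehat{\mathbf 1_K(r\cdot)}(\boldsymbol\xi)=r^{-d}\widehat{\mathbf 1_K}(\boldsymbol\xi/r)$ together with the change of variable $u=1/r$. This produces the desired reciprocal identity
$$(\|\cdot\|_K^{-\alpha})^\wedge(\boldsymbol\theta)=\alpha\int_0^\infty u^{d-\alpha-1}\widehat{\mathbf 1_K}(u\boldsymbol\theta)\;\mathrm du.$$
Choosing $\alpha=d-2s-k$ (which is non-zero precisely when $s\neq(d-k)/2$, explaining the excluded value) matches the exponents of $u$ and, after division by $d-2s-k$, produces exactly the factor $1/(d-2s-k)$ together with $(\|\cdot\|_K^{-d+2s+k})^\wedge(\boldsymbol\theta)$. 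Substituting back gives the stated formula.

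The main technical obstacle is that the radial integrals above do not converge absolutely: neither $\widehat{\mathbf 1_K}$ nor $\|\boldsymbol z\|^{2s}\widehat{\mathbf 1_K}$ is in $L_1$, and the distributional Fourier transform of $\|\cdot\|_K^{-d+2s+k}$ must be treated carefully when $d-2s-k\leq 0$. The infinite smoothness of $\partial K$ is crucial here: it guarantees that the family of distributions $p\mapsto\|\cdot\|_K^{-p}$ extends analytically (with standard Gelfand--Shilov regularization at the integer points), and that each $(\|\cdot\|_K^{-p})^\wedge$ is a locally integrable, homogeneous function on $\mathbb R^d\setminus\{0\}$ whose restriction to any sphere is integrable. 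I would justify every exchange of integration either by analytic continuation from the elementary range $0<p<d$, or by a Gaussian regularization $\|\boldsymbol z\|^{2s}\leadsto\|\boldsymbol z\|^{2s}e^{-\epsilon\|\boldsymbol z\|^2}$ allowing Fubini in all intermediate steps and passing to the limit $\epsilon\to 0^+$ using the tempered-distribution machinery of \S 3 of \cite{koldobsky2005fourier}.
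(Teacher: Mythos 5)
The paper does not prove this lemma; it is quoted verbatim from Koldobsky's book, Theorem~3.26, so there is no in-paper argument to compare against. Your outline --- the projection-slice identity $\widehat{A_{K,\Xi}}(\boldsymbol z)=\widehat{\mathbf 1_K}(z_1\boldsymbol\xi_1+\cdots+z_k\boldsymbol\xi_k)$, Fourier inversion with the differentiation rule, polar coordinates on $\langle\Xi\rangle$, and the radial representation $(\|\cdot\|_K^{-\alpha})^\wedge(\boldsymbol\theta)=\alpha\int_0^\infty u^{d-\alpha-1}\widehat{\mathbf 1_K}(u\boldsymbol\theta)\;\mathrm du$ with $\alpha=d-2s-k$ --- is exactly the standard argument behind that theorem, and all signs, constants, and the excluded value $s=(d-k)/2$ check out. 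You also correctly locate the entire technical content in the regularization of the divergent radial integrals (for large $s$ nothing converges absolutely), and the remedies you name --- analytic continuation in the exponent, exploiting the smoothness of $\partial K$, within the tempered-distribution framework --- are precisely those of the cited source, so the proposal is sound.
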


\subsection{Field theory}

We adopt the notation and definitions from \cite{roman2005field}. Let $F[x]$ denote the ring of polynomials in a single variable $x$ over a field $F$.

\begin{definition}[{\cite[\S 1.4]{roman2005field}}]
If a polynomial $f(x)\in F[x]$ factors into linear factors
$$f(x)=a(x-\zeta_1)(x-\zeta_2)\cdots(x-\zeta_n)$$
in an extension field $E$, that is, if $\zeta_1,\zeta_2,\ldots,\zeta_n\in E$, we say that $f(x)$ \emph{splits} in $E$.
\end{definition}

\begin{definition}[{\cite[\S 1.4]{roman2005field}}]
Let $\mathcal F=\{f_i(x)\}_{i\in I}$ be family of polynomials over a field $F$. A \emph{splitting field} for $\mathcal F$ is the smallest extension field $E$ of $F$ such that each $f_i(x)\in\mathcal F$ splits over $E$.
\end{definition}

\begin{theorem}[{\cite[Theorem~1.4.1]{roman2005field}}]
Every finite family of polynomials over a field $F$ has a splitting field.
\end{theorem}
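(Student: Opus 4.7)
The plan is to reduce to a single polynomial and then induct on its degree, using the Kronecker construction to adjoin roots one at a time.

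First I would observe that if $\mathcal F=\{f_1,f_2,\ldots,f_n\}$ is a finite family, then setting $f(x)\colonequals f_1(x)f_2(x)\cdots f_n(x)$, a field $E$ over which $f$ splits is precisely a field over which each $f_i$ splits, since the roots of $f$ are the union of the roots of the individual $f_i$. Hence it suffices to construct a splitting field for the single polynomial $f$.

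Next I would proceed by induction on $\deg f$. The base case $\deg f=1$ is trivial, as $f$ already splits over $F$. For the inductive step, if $f$ already splits over $F$ we are done; otherwise, choose an irreducible factor $p(x)\in F[x]$ of degree at least $2$. Via the Kronecker construction, form $F_1\colonequals F[x]/(p(x))$. Because $p$ is irreducible, the ideal $(p(x))$ is maximal, so $F_1$ is a field; the natural map $F\hookrightarrow F_1$ identifies $F$ as a subfield, and the image $\alpha$ of $x$ is a root of $p$, hence of $f$. Over $F_1$ we can factor $f(x)=(x-\alpha)g(x)$ with $\deg g=\deg f-1$, and by the inductive hypothesis $g$ admits a splitting field $E$ over $F_1$. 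Then $E\supseteq F$ contains all roots of $f$, so $f$ splits over $E$.

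Finally, to obtain the \emph{smallest} such extension as the definition demands, I would replace $E$ by the subfield generated by $F$ together with the roots $\zeta_1,\zeta_2,\ldots,\zeta_{\deg f}$ of $f$, i.e.\ by the intersection of all intermediate fields containing $F\cup\{\zeta_1,\ldots,\zeta_{\deg f}\}$; this is automatically a splitting field and is contained in every other splitting field inside $E$.

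The main subtlety is structural rather than computational: one must verify that $F[x]/(p(x))$ is genuinely a field (which is where irreducibility of $p$ is essential) and that $F$ embeds canonically so that the induction is meaningful; and, at the end, one must check that the subfield generated by $F$ and the roots is indeed minimal in the abstract sense. Once these points are settled, existence follows by induction on the degree and minimality by passing to the generated subfield.
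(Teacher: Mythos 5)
Your proposal is correct and is essentially the standard textbook argument: reduce the finite family to the single product polynomial, then adjoin roots one at a time via the Kronecker quotient construction $F[x]/(p(x))$ by induction on degree, and finally pass to the subfield generated by $F$ and the roots to get minimality. The paper itself does not prove this statement — it cites it directly from Roman's \emph{Field Theory} — and the proof there follows the same route you took, so there is nothing to reconcile. One small point worth making explicit in a polished write-up: the induction hypothesis must be quantified over \emph{all} base fields (``for every field $F$ and every $f\in F[x]$ with $\deg f\le n-1$ there is a splitting field''), since the inductive step applies the hypothesis to $g$ over the enlarged field $F_1$, not over the original $F$; as stated your induction ``on $\deg f$'' leaves this implicit.
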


\begin{definition}[{\cite[\S 1.5]{roman2005field}}]
Let $E/F$ be a field extension. An element $\zeta\in E$ is said to be \emph{algebraic} over $F$ if $\zeta$ is a root of some polynomial over $F$. An element that is not algebraic over $F$ is said to be \emph{transcendental} over $F$.
\end{definition}

\begin{definition}[{\cite[\S 1.5]{roman2005field}}]
If $\zeta$ is algebraic over $F$, the set of all polynomials with a root at $\zeta$
$$\mathcal I_\zeta\colonequals\{f(x)\in F[x]\mid f(\zeta)=0\}$$
is a non-zero ideal in $F[x]$ and is therefore generated by a unique monic polynomial, called the \emph{minimal polynomial} of $\zeta$ over $F$ and denoted by $\mu_\zeta(x)$.
\end{definition}

The following theorem characterizes minimal polynomials in a variety of useful ways.

\begin{theorem}[{\cite[Theorem~1.5.1]{roman2005field}}]\thlabel{thm:03}
Let $E/F$ be a field extension and let $\zeta\in E$ be algebraic over $F$. Then among all polynomials in $F[x]$, the minimal polynomial $\mu_\zeta(x)$ is:
\begin{enumerate}
\item the unique monic irreducible polynomial $\mu(x)$ for which $\mu(\zeta)=0$;
\item the unique monic polynomial $\mu(x)$ of smallest degree for which $\mu(\zeta)=0$;
\item the unique monic polynomial $\mu(x)$ with the property that for $f(x)\in F[x]$, $f(\zeta)=0$ if and only if $\mu(x)\mid f(x)$.
\end{enumerate}
In other words, $\mu_\zeta(x)$ is the unique monic generator of the ideal $\mathcal I_\zeta$.
\end{theorem}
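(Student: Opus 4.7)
The plan is to exploit the fact that, because $F$ is a field, $F[x]$ is a Euclidean domain (with valuation given by degree) and hence a principal ideal domain. Since $\zeta$ is algebraic, $\mathcal I_\zeta$ is a nonzero ideal, so there is a unique monic generator, which we call $\mu_\zeta$. All three characterizations should then fall out by short arguments built on this generator and the division algorithm; I would prove (3) first, then derive (2) and (1) from it, and in each case separately verify existence and uniqueness.

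For (3), note that by construction $\mathcal I_\zeta=(\mu_\zeta)$, so $f(\zeta)=0$ if and only if $f\in\mathcal I_\zeta$ if and only if $\mu_\zeta\mid f$; existence is immediate. For uniqueness, if some monic $\mu$ has the property that $f(\zeta)=0\iff\mu\mid f$, then applying the equivalence to $\mu$ itself gives $\mu(\zeta)=0$, so $\mu_\zeta\mid\mu$, and applying it to $\mu_\zeta$ gives $\mu\mid\mu_\zeta$; two mutually divisible monic polynomials coincide. For (2), suppose $f\in F[x]$ is monic with $f(\zeta)=0$ and has minimal degree. By (3), $\mu_\zeta\mid f$, hence $\deg\mu_\zeta\leq\deg f$; minimality then forces $\deg f=\deg\mu_\zeta$, and since $f/\mu_\zeta$ has degree zero and both polynomials are monic, $f=\mu_\zeta$. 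Conversely one must check $\mu_\zeta$ is itself of smallest degree in $\mathcal I_\zeta$: if $0\neq g\in\mathcal I_\zeta$ had $\deg g<\deg\mu_\zeta$, then the relation $\mu_\zeta\mid g$ coming from (3) is impossible, contradiction.

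For (1), the key step is irreducibility. Suppose $\mu_\zeta(x)=g(x)h(x)$ in $F[x]$ with $1\leq\deg g,\deg h<\deg\mu_\zeta$. Then $g(\zeta)h(\zeta)=0$ in the field $E$, which has no zero divisors, so (say) $g(\zeta)=0$. Normalizing $g$ to be monic produces an element of $\mathcal I_\zeta$ of degree strictly less than $\deg\mu_\zeta$, contradicting (2). Hence $\mu_\zeta$ is irreducible. Uniqueness of the monic irreducible root-polynomial follows from (3): if monic irreducible $\mu$ satisfies $\mu(\zeta)=0$, then $\mu_\zeta\mid\mu$, and irreducibility of $\mu$ together with the fact that $\deg\mu_\zeta\geq 1$ (as $\zeta$ is a root of the nonconstant $\mu$) forces $\mu=\mu_\zeta$ up to a unit in $F[x]$, i.e., a nonzero scalar, which is pinned down by the monic normalization.

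There is really no single hard step here; the proof is essentially a bookkeeping exercise around the PID structure of $F[x]$. The one place that deserves care is the repeated uniqueness arguments, which all reduce to the observation that two monic polynomials that divide each other in $F[x]$ must be equal; this has to be invoked cleanly in each of the three parts so that the statement reads as a genuine \emph{characterization} rather than a mere list of properties of the already-defined object $\mu_\zeta$.
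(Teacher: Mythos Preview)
Your proof is correct and is the standard argument via the PID structure of $F[x]$. Note, however, that the paper does not supply its own proof of this statement: it is quoted verbatim as background from Roman's textbook and used only as a tool later on, so there is no in-paper argument to compare against.
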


\subsection{Valued fields}

We adopt the notation and definitions from \cite[\S 2]{engler2005valued}.

\begin{definition}
Let $F$ be a field. A \emph{valuation} on $F$ is a map $v:F\to\mathbb R\cup\{\infty\}$ satisfying the following axioms for all $x,y\in F$:
\begin{enumerate}
\item $v(x)=\infty\iff x=0$;
\item $v(xy)=v(x)+v(y)$;
\item $v(x+y)\geq\min(v(x),v(y))$.
\end{enumerate}
As a consequence, we obtain for all $x,y\in F$:
\begin{enumerate}[resume]
\item $v(1)=0$;
\item $v(x^{-1})=-v(x)$;
\item $v(-x)=v(x)$;
\item $v(x)<v(y)\implies v(x+y)=v(x)$.
\end{enumerate}
\end{definition}

An example of a non-trivial valuation is the $p$-adic valuation on the rational function field $F(\boldsymbol x)$, where $p$ is any irreducible polynomial from $F[\boldsymbol x]$, $F$ being an arbitrary field.

\begin{definition}
Let $F$ be a field. For every irreducible polynomial $p\in F[\boldsymbol x]$, the \emph{$p$-adic valuation} on the rational function field $F(\boldsymbol x)$ is defined by
$$v_p\left(p^\nu\frac{f}{g}\right)=\nu,$$
where $\nu\in\mathbb Z$ and $f,g\in F[x]\setminus\{0\}$ are not divisible by $p$.
\end{definition}

Note that $v_p$ restricted to $F$ is trivial. There is one more interesting valuation on $F(\boldsymbol x)$, trivial on $F$.

\begin{definition}
Let $F$ be a field. The \emph{degree valuation} on the rational function field $F(\boldsymbol x)$ is defined by
$$v_\infty\left(\frac{f}{g}\right)=\deg g-\deg f,$$
where $f,g\in F[\boldsymbol x]\setminus\{0\}$.
\end{definition}

Interestingly enough, there are no valuations on $F[\boldsymbol x]$ other than the ones just mentioned, assuming triviality on $F$ (cf. \cite[Theorem~2.1.4]{engler2005valued}).

\begin{definition}
Let $F$ be a field. A subring $\mathcal O$ of $F$ satisfying $x\in\mathcal O$ or $x^{-1}\in\mathcal O$ for all $x\in F\setminus\{0\}$ is called a \emph{valuation ring} of $F$.
\end{definition}

The following is a direct consequence of Chevalley's Theorem \cite[Theorem~3.1.1]{engler2005valued}:

\begin{theorem}[{\cite[Theorem~3.1.2]{engler2005valued}}]
Let $F_2/F_1$ be a field extension and let $\mathcal O_1\subseteq F_1$ be a valuation ring. Then there is an extension $\mathcal O_2$ of $\mathcal O_1$ in $F_2$.
\end{theorem}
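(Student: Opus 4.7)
The plan is to deduce the statement directly from Chevalley's Theorem \cite[Theorem~3.1.1]{engler2005valued}, which in its general form asserts the following: for any subring $R$ of a field $F$ and any prime ideal $\mathfrak p$ of $R$, there exists a valuation ring $\mathcal O\subseteq F$ with $R\subseteq\mathcal O$ whose maximal ideal $\mathfrak m$ satisfies $\mathfrak m\cap R=\mathfrak p$. I would apply this inside the field $F=F_2$ with $R=\mathcal O_1$ (viewed as a subring of $F_2$) and $\mathfrak p=\mathfrak m_1$, the maximal ideal of $\mathcal O_1$. This immediately produces a valuation ring $\mathcal O_2\subseteq F_2$ with $\mathcal O_1\subseteq\mathcal O_2$ whose maximal ideal $\mathfrak m_2$ contracts to $\mathfrak m_1$, i.e.\ $\mathfrak m_2\cap\mathcal O_1=\mathfrak m_1$.

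It remains to verify that $\mathcal O_2$ genuinely \emph{extends} $\mathcal O_1$, in the sense that $\mathcal O_2\cap F_1=\mathcal O_1$. The inclusion $\mathcal O_1\subseteq\mathcal O_2\cap F_1$ is automatic. For the reverse, suppose $x\in(\mathcal O_2\cap F_1)\setminus\mathcal O_1$. Since $\mathcal O_1$ is a valuation ring of $F_1$, the defining dichotomy ($x\in\mathcal O_1$ or $x^{-1}\in\mathcal O_1$) forces $x^{-1}\in\mathcal O_1$; moreover $x^{-1}$ cannot be a unit of $\mathcal O_1$ (else $x\in\mathcal O_1$), so $x^{-1}\in\mathfrak m_1$. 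By the contraction relation, $\mathfrak m_1\subseteq\mathfrak m_2$, hence $x^{-1}\in\mathfrak m_2$. Combined with $x\in\mathcal O_2$ this yields $1=x\cdot x^{-1}\in\mathcal O_2\cdot\mathfrak m_2\subseteq\mathfrak m_2$, contradicting $\mathfrak m_2\subsetneq\mathcal O_2$.

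The argument is essentially a packaging exercise once Chevalley's Theorem is on hand; the only point of care is the contraction verification above, which leans crucially on the valuation-ring dichotomy characterizing $\mathcal O_1$ inside $F_1$. If one wished to argue from scratch, avoiding Chevalley's Theorem, the genuine obstacle would shift to the theorem itself: one would need a Zorn's lemma ascent on the poset of local subrings of $F_2$ that contain $\mathcal O_1$ and are dominated by $\mathfrak m_1$, and then the proof that a maximal such subring is automatically a valuation ring of $F_2$.
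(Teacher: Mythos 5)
Your proposal is correct, and it is exactly the route the paper indicates: the paper does not give a proof but prefaces the statement with the remark that it is ``a direct consequence of Chevalley's Theorem [Theorem~3.1.1]'' and cites Engler--Prestel for the details. Your application of Chevalley's Theorem with $R=\mathcal O_1$ and $\mathfrak p=\mathfrak m_1$, together with the verification that the contraction condition $\mathfrak m_2\cap\mathcal O_1=\mathfrak m_1$ (plus the valuation-ring dichotomy in $F_1$) forces $\mathcal O_2\cap F_1=\mathcal O_1$, is precisely the standard derivation the paper is implicitly invoking, so you have simply filled in the details the paper left to the reference.
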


In particular, it means that any valuation $v$ on a field $F$ always admits at least one extension to every field $E$ containing $F$.

\section{Proof of main theorem}

Let us begin with the following regularity lemma:

\begin{lemma}\thlabel{lem:03}
Let $K\subseteq\mathbb R^d$ be a convex body with an infinitely smooth boundary $\partial K$. If the $k$-dimensional isotropic volume function of $K$ satisfies \eqref{eq:01} with $a_1,a_2,\ldots,a_n$ being linearly independent, then $b_1,b_2,\ldots,b_n$ are infinitely smooth in some neighborhood of $t=0$.
\end{lemma}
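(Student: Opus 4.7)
My plan is to reduce smoothness of the $b_i$ to smoothness of the isotropic volume functions $V_{K,H_j}(t)$ at finitely many subspaces $H_j$ by a linear-algebra trick, and then to extract the latter from the regularity of $\partial K$ via Fubini and polar coordinates in $H$.

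After absorbing any linear dependence among the summands I may assume all coefficient functions $a_i$ in \eqref{eq:01} are linearly independent on $\mathrm{Gr}(k,\mathbb R^d)$, and hence pick subspaces $H_0,H_1,\ldots,H_n$ for which the matrix $M=\bigl(a_i(H_j)\bigr)_{i,j}$ is invertible. Since the Grassmannian is compact, the tube lemma forces the neighborhood $U$ from the definition of local $k$-separable integrability to contain a uniform slab $\mathrm{Gr}(k,\mathbb R^d)\times[0,\epsilon)$. Evaluating \eqref{eq:01} at each $H_j$ produces the linear system
\[
V_{K,H_j}(t)=\sum_{i} a_i(H_j)\,b_i(t),\qquad j=0,1,\ldots,n,\ t\in[0,\epsilon),
\]
which, inverted by $M^{-1}$, expresses every $b_i(t)$ as a fixed $\mathbb R$-linear combination of $V_{K,H_0}(t),\ldots,V_{K,H_n}(t)$.

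It therefore suffices to prove that $t\mapsto V_{K,H}(t)$ is $C^\infty$ on a right neighborhood of $0$ for each fixed $H$. Decomposing $\mathbb R^d=H\oplus H^\perp$, using Fubini, and passing to polar coordinates in $H$ I obtain
\[
V_{K,H}(t)=\int_0^t r^{k-1}G_H(r)\,\mathrm dr,\qquad G_H(r)\colonequals\int_{\mathbb S^{d-1}\cap H}\mathcal H^{d-k}\bigl(K\cap(H^\perp+r\boldsymbol\theta)\bigr)\,\mathrm d\boldsymbol\theta.
\]
For a $C^\infty$ convex body with the origin in its interior, the Minkowski functional $\|\cdot\|_K$ is $C^\infty$ off the origin, and applying the implicit function theorem to $\|\cdot\|_K=1$ along the $H^\perp$-fibres shows that $r\mapsto\mathcal H^{d-k}(K\cap(H^\perp+r\boldsymbol\theta))$ is $C^\infty$ in a neighborhood of $r=0$ uniformly in $\boldsymbol\theta$. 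Hence $G_H\in C^\infty([0,\epsilon))$, and the fundamental theorem of calculus delivers $V_{K,H}\in C^\infty([0,\epsilon))$.

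I expect the only substantive difficulty to be the uniform smoothness of the parallel section function as $r\downarrow 0$; this encodes the geometric content of the lemma and tacitly uses that the origin is interior to $K$, which is automatic under the global symmetry hypothesis of \thref{thm:01}. If the origin happens to be exterior to $K$, the cylinder around $H^\perp$ misses $K$ for small $t$, so $V_{K,H}\equiv 0$ near $0$ and the conclusion is trivial. Everything else is routine bookkeeping.
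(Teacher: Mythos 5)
Your proof follows the same route as the paper's: choose subspaces making the matrix $\bigl(a_i(H_j)\bigr)$ invertible, invert it to write each $b_i$ as a fixed linear combination of the $V_{K,H_j}$, and conclude from the smoothness of $t\mapsto V_{K,H_j}(t)$ near $t=0$. You go a bit further than the paper, which simply asserts that smoothness; your polar-coordinate decomposition $V_{K,H}(t)=\int_0^t r^{k-1}G_H(r)\,\mathrm dr$ (which also appears earlier in the paper as a remark) together with the regularity of the parallel section function is a correct and welcome elaboration, as is the tube-lemma observation guaranteeing a uniform slab $\mathrm{Gr}(k,\mathbb R^d)\times[0,\epsilon)$. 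One small inaccuracy in your closing aside: if the origin lies outside $K$, it is not true that the hypercylinder $\{\boldsymbol x:\mathrm{dist}(\boldsymbol x,H^\perp)\leq t\}$ misses $K$ for small $t$, since the cylinder always contains $H^\perp$, which may cut through $K$ far from the origin; however this is moot here because in the application (\thref{thm:01}) $K$ is origin-symmetric, so the origin is interior.
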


\begin{proof}
Since $a_1,a_2,\ldots,a_n$ are linearly independent, there exist $H_1,H_2,\ldots,H_n\in\mathrm{Gr}(k,\mathbb R^d)$ such that the alternant matrix
$$A\colonequals\begin{pmatrix}a_1(H_1)&a_2(H_1)&\cdots&a_n(H_1)\\a_1(H_2)&a_2(H_2)&\cdots&a_n(H_2)\\\vdots&\vdots&\ddots&\vdots\\a_1(H_n)&a_2(H_n)&\cdots&a_n(H_n)\end{pmatrix}$$
is invertible. By definition, for every $t\in[0,\infty)$ we have $\boldsymbol v(t)=A\cdot\boldsymbol b(t)$, where
$$\boldsymbol v(t)\colonequals\begin{pmatrix}V_{K,H_1}(t)\\V_{K,H_2}(t)\\\vdots\\V_{K,H_n}(t)\end{pmatrix},\quad\boldsymbol b(t)\colonequals\begin{pmatrix}b_1(t)\\b_2(t)\\\vdots\\b_n(t)\end{pmatrix}.$$
Now, it follows that $\boldsymbol b(t)=A^{-1}\cdot\boldsymbol v(t)$ is infinitely smooth in some neighborhood of $t=0$ because so is $\boldsymbol v(t)$. This concludes the proof.
\end{proof}

Let us also rephrase \thref{lem:01} in an equivalent, coordinate-free way:

\begin{proposition}\thlabel{cor:01}
Let $K$ be an infinitely smooth origin-symmetric convex body in $\mathbb R^d$, $1\leq k<d$. Then for every $k$-dimensional linear subspace $H\in\mathrm{Gr}(k,\mathbb R^d)$ and every $s\in\mathbb N$, $s\neq(d-k)/2$,
$$V_{K,H}^{(2s+k)}(0)=C(d,s,k)\int_{\mathbb S^{d-1}\cap H}(\|\cdot\|_K^{-d+2s+k})^\wedge(\boldsymbol\theta)\;\mathrm d\boldsymbol\theta,$$
where $C(d,s,k)$ is a non-zero constant.
\end{proposition}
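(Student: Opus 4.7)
The plan is to derive a direct identity of the form $V_{K,H}^{(2s+k)}(0) = c(k,s)\,\Delta^s A_{K,\Xi}(\boldsymbol 0)$ for any orthonormal frame $\Xi$ spanning $H$, and then simply invoke \thref{lem:01} to replace the right-hand side by the desired Fourier integral.

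First I would fix an orthonormal basis $\Xi = (\boldsymbol\xi_1,\ldots,\boldsymbol\xi_k)$ of $H$ and observe that, in the coordinates along $\Xi$, the condition $\mathrm{dist}(\boldsymbol x, H^\perp) \leq t$ becomes $\|(t_1,\ldots,t_k)\|_2 \leq t$, so Fubini and a passage to polar coordinates in $\mathbb R^k$ give
$$V_{K,H}(t) = \int_{\mathbb B^k(t)} A_{K,\Xi}(\boldsymbol t)\,\mathrm d\boldsymbol t = \int_0^t r^{k-1}\int_{\mathbb S^{k-1}} A_{K,\Xi}(r\boldsymbol\omega)\,\mathrm d\boldsymbol\omega\,\mathrm dr.$$
Since $\partial K$ is smooth and $\boldsymbol 0 \in \mathrm{int}(K)$ by origin-symmetry, $A_{K,\Xi}$ is smooth in a neighborhood of $\boldsymbol 0$, and I would apply Pizzetti's mean-value formula
$$\frac{1}{|\mathbb S^{k-1}|}\int_{\mathbb S^{k-1}} A_{K,\Xi}(r\boldsymbol\omega)\,\mathrm d\boldsymbol\omega = \sum_{n \geq 0} \frac{\Gamma(k/2)}{4^n\,n!\,\Gamma(n+k/2)}\,r^{2n}\,\Delta^n A_{K,\Xi}(\boldsymbol 0)$$
and integrate term-by-term against $r^{k-1}$. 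Reading off the $(2s+k)$-th Taylor coefficient at $t=0$ then gives $V_{K,H}^{(2s+k)}(0) = c(k,s)\,\Delta^s A_{K,\Xi}(\boldsymbol 0)$ with an explicit non-zero constant $c(k,s)$ assembled from $|\mathbb S^{k-1}|$, a Pizzetti coefficient, and the factorial $(k+2s-1)!$ coming from differentiating $t^{k+2s}$.

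Substituting \thref{lem:01} then delivers the claim with
$$C(d,s,k) = c(k,s)\cdot\frac{(-1)^s}{2^k\pi^k(d-2s-k)},$$
where the factor $(d-2s-k)$ in the denominator is precisely what the hypothesis $s \neq (d-k)/2$ prevents from vanishing; all other pieces of $c(k,s)$ are manifestly non-zero. Coordinate-freeness is immediate on both sides: $V_{K,H}(t)$ depends only on $H$ by definition, while the integration domain $\mathbb S^{d-1}\cap H$ and the integrand $(\|\cdot\|_K^{-d+2s+k})^\wedge$ also depend only on $H$, so the auxiliary choice of frame $\Xi$ drops out. There is no genuine mathematical obstacle here — the statement is essentially a reformulation of \thref{lem:01}, and the only thing worth being careful about is the bookkeeping of $\Gamma$-values, factorials, and Pizzetti weights needed to confirm that the composite constant $C(d,s,k)$ is non-zero for every admissible $s$.
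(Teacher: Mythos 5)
Your proof is correct and takes essentially the same route as the paper: both start from the identity $V_{K,H}(t)=\int_{\mathbb B^k(t)}A_{K,\Xi}$, expand the ball average in powers of $t$ via iterated Laplacians, read off the $(2s+k)$-th coefficient, and then invoke \thref{lem:01}. The paper cites Ovall's ball-average theorem for the expansion, whereas you derive it from Pizzetti's spherical-mean formula and integrate in $r$; the two statements are equivalent (one is the radial integral of the other), so this is a matter of bookkeeping rather than a genuinely different argument.
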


\begin{proof}
Let $\Xi\in \mathrm{St}(k,\mathbb R^d)$ be an orthonormal basis of $H$. Clearly, we have
$$V_{K,H}(t)=\int_{\mathbb B^k(t)}A_{K,\Xi}(\boldsymbol u)\;\mathrm d\boldsymbol u,$$
so by \cite[Theorem~3]{Ovall2016TheLA} the $k$-dimensional isotropic volume function admits the series expansion of the form
$$V_{K,H}(t)=\omega_k\sum_{i=0}^s\frac{\Delta^iA_{K,\Xi}(\boldsymbol 0)}{2^ii!\prod_{j=1}^i(2j+k)}t^{2i+k}+o(t^{2s+k}),$$
where $\omega_k$ denotes the volume of the unit ball in $k$ dimensions. In particular, we get
$$V_{K,H}^{(2s+k)}(0)=\omega_k\frac{\Delta^sA_{K,\Xi}(\boldsymbol 0)}{2^ss!\prod_{j=1}^s(2j+k)}(2s+k)!,$$
which further by \thref{lem:01} equals
$$\omega_k\frac{1}{2^ss!\prod_{j=1}^s(2j+k)}(2s+k)!\frac{(-1)^s}{2^k\pi^k(d-2s-k)}\int_{\mathbb S^{d-1}\cap H}(\|\cdot\|_K^{-d+2s+k})^\wedge(\boldsymbol\theta)\;\mathrm d\boldsymbol\theta$$
for every $s\in\mathbb N$, $s\neq(d-k)/2$. This concludes the proof.
\end{proof}

Finally, we are ready to prove the main theorem.

\begin{proof}[Proof of \thref{thm:01}]
The proof will consist of three clearly outlined parts. Firstly, using simple linear algebra, we will reduce the problem to solving an abstract system of polynomial equations. Secondly, using more sophisticated tools of valuation theory, we will eventually characterize its solutions. Finally, we will check the solutions by plugging them into the original problem.

\subsection{Constructing the system of polynomial equations}

Suppose that $K$ is locally $k$-separably integrable. Without loss of generality, we may assume that the functions $a_1,a_2,\ldots,a_n$ are linearly independent. In light of \thref{lem:03}, differentiating \eqref{eq:01} with respect to $t$ yields
\begin{equation}\label{eq:02}V_{K,H}^{(2s+k)}(0)=\sum_{i=1}^na_i(H)b_i^{(2s+k)}(0)\end{equation}
for every $s\in\mathbb N$. Observe that the right-hand sides of \eqref{eq:02} span a finite-dimensional subspace of $C(\mathrm{Gr}(k,\mathbb R^d),\mathbb R)$ of dimension not greater than $n$. Indeed, they are linear combinations of a finite set of functions $a_1,a_2,\ldots,a_n$. Hence also the left-hand sides of \eqref{eq:02} for all $s\in\mathbb N$ span a finite-dimensional subspace of $C(\mathrm{Gr}(k,\mathbb R^d),\mathbb R)$. It follows that for every $s\in\mathbb N$ there exist scalars $c_{s,0},c_{s,1},\ldots,c_{s,n}$, not all zero, such that
\begin{equation}\label{eq:13}\sum_{i=0}^nc_{s,i}V_{K,H}^{(2s+2i+k)}(0)=0.\end{equation}
By virtue of \thref{cor:01}, for every $s\geq\lceil d/2\rceil$ this reads
$$\int_{\mathbb S^{d-1}\cap H}\sum_{i=0}^n\tilde c_{s,i}(-1)^i(\|\cdot\|_K^{-d+2s+2i+k})^\wedge(\boldsymbol\theta)\;\mathrm d\boldsymbol\theta=0,$$
where
$$\tilde c_{s,i}\colonequals(-1)^ic_{s,i}C(d,s+i,k).$$
It means precisely that the $k$-dimensional spherical Radon transform (cf. \cite[\S 2.3]{koldobsky2005fourier}) of the integrand is zero for every $H\in\mathrm{Gr}(k,\mathbb R^d)$. Since $K$ is origin-symmetric, the integrand is an even function, whence
\begin{equation}\label{eq:03}\sum_{i=0}^n\tilde c_{s,i}(-1)^i(\|\cdot\|_K^{-d+2s+2i+k})^\wedge(\boldsymbol\theta)=0\end{equation}
for every $\boldsymbol\theta\in\mathbb S^{d-1}$ (cf. \cite[Corollary~3.10]{koldobsky2005fourier}). Further, using the simple fact that
$$(\|\cdot\|_K^{-d+2s+2i+k})^\wedge(\boldsymbol\theta)=t^d(\|t\cdot\|_K^{-d+2s+2i+k})^\wedge(t\boldsymbol\theta)=t^{2s+2i+k}(\|\cdot\|_K^{-d+2s+2i+k})^\wedge(t\boldsymbol\theta)$$
(cf. \cite[Lemma~2.21]{koldobsky2005fourier}) we can rewrite \eqref{eq:03} in the form
$$\sum_{i=0}^n\tilde c_{s,i}(-1)^it^{2s+2i+k}(\|\cdot\|_K^{-d+2s+2i+k})^\wedge(t\boldsymbol\theta)=0.$$
Dividing both sides by $t^{2s+k}$ and using $\|\boldsymbol\theta\|_2=1$ yields
$$\sum_{i=0}^n\tilde c_{s,i}(-1)^i\|t\boldsymbol\theta\|_2^{2i}(\|\cdot\|_K^{-d+2s+2i+k})^\wedge(t\boldsymbol\theta)=0.$$
By the differentiation property of the Fourier transform \eqref{eq:11}, we get
$$\sum_{i=0}^n\tilde c_{s,i}(\Delta^i\|\cdot\|_K^{-d+2s+2i+k})^\wedge(t\boldsymbol\theta)=0,$$
It follows that the Fourier transform of the distribution
\begin{equation}\label{eq:19}\sum_{i=0}^n\tilde c_{s,i}\Delta^i\|\cdot\|_K^{-d+2s+2i+k}\end{equation}
is supported at the origin, in which case it is a polynomial (cf. \cite[\S 7.16]{rudin1991functional}). Denote this polynomial by $P_s$ and observe that it is homogeneous of degree $-d+2s+k$.

\begin{claim}\thlabel{clm:01}
For any multi-index $\boldsymbol\alpha\in\mathbb N^d$, $m\in\mathbb Z$ and any infinitely smooth function $f:\mathbb R^d\to\mathbb R$ we have
$$\partial^{\boldsymbol\alpha}f^m=Q_{\boldsymbol\alpha}^m(\{\partial^{\boldsymbol\beta}f:\boldsymbol\beta\leq\boldsymbol\alpha\})f^{m-|\boldsymbol\alpha|},$$
where $Q_{\boldsymbol\alpha}^m$ is a polynomial depending on $m$ only through its coefficients. Moreover,
$$Q_{\boldsymbol\alpha}^m(\{\partial^{\boldsymbol\beta}f:\boldsymbol\beta\leq\boldsymbol\alpha\})=m^{|\boldsymbol\alpha|}(\nabla f)^{\boldsymbol\alpha}+O(m^{|\boldsymbol\alpha|-1}),$$
where $(\nabla f)^{\boldsymbol\alpha}$ denotes a multi-index power of the vector $\nabla f\in\mathbb R^d$. In particular, we have
$$\Delta^if^m=\tilde Q_i^m(\{\partial^{\boldsymbol\beta}f:|\boldsymbol\beta|\leq 2i\})f^{m-2i}$$
and
$$\tilde Q_i^m(\{\partial^{\boldsymbol\beta}f:|\boldsymbol\beta|\leq 2i\})=m^{2i}\|\nabla f\|_2^{2i}+O(m^{2i-1}),$$
where $\tilde Q_i^m$ is again a polynomial depending on $m$ only through its coefficients and $\|\nabla f\|_2$ denotes the Euclidean norm of the vector $\nabla f\in\mathbb R^d$.\hfill$\blacksquare$
\end{claim}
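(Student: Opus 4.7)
\textbf{Proof plan for \thref{clm:01}.} The natural strategy is induction on $|\boldsymbol\alpha|$. The base case $\boldsymbol\alpha=\boldsymbol 0$ is trivial with $Q_{\boldsymbol 0}^m\equiv 1$. For the inductive step, assuming the formula holds for some multi-index $\boldsymbol\alpha$, I differentiate once more using the Leibniz rule:
\begin{equation*}
\partial^{\boldsymbol\alpha+\boldsymbol e_j}f^m=\partial_j\bigl(Q_{\boldsymbol\alpha}^m(\{\partial^{\boldsymbol\beta}f\})\cdot f^{m-|\boldsymbol\alpha|}\bigr)=\bigl(\partial_jQ_{\boldsymbol\alpha}^m\bigr)f^{m-|\boldsymbol\alpha|}+(m-|\boldsymbol\alpha|)(\partial_jf)\,Q_{\boldsymbol\alpha}^m\,f^{m-|\boldsymbol\alpha|-1}.
\end{equation*}
The derivative $\partial_jQ_{\boldsymbol\alpha}^m$ is evaluated by the chain rule, producing terms $(\partial Q_{\boldsymbol\alpha}^m/\partial y_{\boldsymbol\beta})\cdot\partial^{\boldsymbol\beta+\boldsymbol e_j}f$ with $\boldsymbol\beta\leq\boldsymbol\alpha$, hence only derivatives $\partial^{\boldsymbol\gamma}f$ with $\boldsymbol\gamma\leq\boldsymbol\alpha+\boldsymbol e_j$ appear. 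Setting
\begin{equation*}
Q_{\boldsymbol\alpha+\boldsymbol e_j}^m\colonequals f\cdot(\partial_jQ_{\boldsymbol\alpha}^m)+(m-|\boldsymbol\alpha|)(\partial_jf)\cdot Q_{\boldsymbol\alpha}^m
\end{equation*}
one obtains the desired polynomial, which depends on $m$ only through its coefficients since $Q_{\boldsymbol\alpha}^m$ does.

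For the leading-order statement, the inductive hypothesis reads $Q_{\boldsymbol\alpha}^m=m^{|\boldsymbol\alpha|}(\nabla f)^{\boldsymbol\alpha}+O(m^{|\boldsymbol\alpha|-1})$. The first summand in the recursion above contributes $f\cdot\partial_jQ_{\boldsymbol\alpha}^m=O(m^{|\boldsymbol\alpha|})$ (the $m$-degree does not increase under $\partial_j$), while the second contributes $m\cdot(\partial_jf)\cdot m^{|\boldsymbol\alpha|}(\nabla f)^{\boldsymbol\alpha}+O(m^{|\boldsymbol\alpha|})=m^{|\boldsymbol\alpha|+1}(\nabla f)^{\boldsymbol\alpha+\boldsymbol e_j}+O(m^{|\boldsymbol\alpha|})$. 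Adding the two gives precisely $Q_{\boldsymbol\alpha+\boldsymbol e_j}^m=m^{|\boldsymbol\alpha+\boldsymbol e_j|}(\nabla f)^{\boldsymbol\alpha+\boldsymbol e_j}+O(m^{|\boldsymbol\alpha+\boldsymbol e_j|-1})$, closing the induction.

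For the Laplacian specialization, I would expand by the multinomial theorem
\begin{equation*}
\Delta^i=\Bigl(\sum_{j=1}^d\partial_j^2\Bigr)^i=\sum_{|\boldsymbol\alpha|=i}\binom{i}{\boldsymbol\alpha}\partial^{2\boldsymbol\alpha}
\end{equation*}
and apply the general formula to each summand, defining $\tilde Q_i^m\colonequals\sum_{|\boldsymbol\alpha|=i}\binom{i}{\boldsymbol\alpha}Q_{2\boldsymbol\alpha}^m$. The leading term in $m$ becomes
\begin{equation*}
m^{2i}\sum_{|\boldsymbol\alpha|=i}\binom{i}{\boldsymbol\alpha}(\nabla f)^{2\boldsymbol\alpha}=m^{2i}\Bigl(\sum_{j=1}^d(\partial_jf)^2\Bigr)^i=m^{2i}\|\nabla f\|_2^{2i},
\end{equation*}
again by the multinomial theorem, which is exactly the asserted asymptotics.

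There is essentially no real obstacle; the whole statement is a careful bookkeeping exercise reminiscent of Fa\`a di Bruno. The only subtlety worth flagging is to make sure that the chain-rule expansion of $\partial_jQ_{\boldsymbol\alpha}^m$ does not introduce additional $m$-factors (which it does not, since the \emph{coefficients} of $Q_{\boldsymbol\alpha}^m$ carry all the $m$-dependence and are differentiated only with respect to their symbolic arguments), so that the induction on the $m$-degree of $Q_{\boldsymbol\alpha}^m$ and the identification of its leading coefficient with $(\nabla f)^{\boldsymbol\alpha}$ proceed in lockstep.
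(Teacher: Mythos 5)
Your argument is correct and is precisely the induction on $|\boldsymbol\alpha|$ that the paper indicates (and leaves to the reader), with the recursion $Q_{\boldsymbol\alpha+\boldsymbol e_j}^m= f\,\partial_jQ_{\boldsymbol\alpha}^m+(m-|\boldsymbol\alpha|)(\partial_jf)Q_{\boldsymbol\alpha}^m$ giving both the polynomial structure and the leading $m$-asymptotics, and the multinomial expansion of $\Delta^i$ cleanly yielding the $\|\nabla f\|_2^{2i}$ specialization.
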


Since the proof is a tedious but conceptually straightforward induction on $|\boldsymbol\alpha|$, we leave it to the reader.\\

Applying \thref{clm:01} to \eqref{eq:19} yields
$$\sum_{i=0}^n\tilde c_{s,i}\Delta^i\|\cdot\|_K^{-d+2s+2i+k}=\sum_{i=0}^n\tilde c_{s,i}\tilde Q_i^{-d+2s+2i+k}(\{\partial^{\boldsymbol\beta}\|\boldsymbol x\|_K:|\beta|\leq 2i\})\|\boldsymbol x\|_K^{-d+2s+k}.$$
Thus finally for every $s\geq\lceil d/2\rceil$ and $\boldsymbol x\in\mathbb R^d\setminus\{\boldsymbol 0\}$ we obtain
\begin{equation}\label{eq:04}\sum_{i=0}^n\tilde c_{s,i}\tilde Q_i^{-d+2s+2i+k}(\{\partial^{\boldsymbol\beta}\|\boldsymbol x\|_K:|\beta|\leq 2i\})=P_s(\boldsymbol x)\|\boldsymbol x\|_K^{d-2s-k},\end{equation}
where $\tilde c_{s,i}$ are constants, $\tilde Q_i^{-d+2s+2i+k}$ are polynomials that depend on $s$ only through their coefficients and $P_s$ is a homogeneous polynomial of degree $-d+2s+k$.

\begin{claim}\thlabel{clm:04}
All but finitely many polynomials $P_s$, $s\geq\lceil d/2\rceil$, are non-zero unless $K$ is a Euclidean ball. Indeed, suppose that there exists an increasing sequence $(s_j)_{j\in\mathbb N}$ such that $P_{s_j}=0$ for every $j\in\mathbb N$. Then \eqref{eq:04} reads
$$\sum_{i=0}^n\tilde c_{s_j,i}\tilde Q_i^{-d+2s_j+2i+k}(\{\partial^{\boldsymbol\beta}\|\boldsymbol x\|_K:|\beta|\leq 2i\})=0.$$
It follows immediately from \thref{clm:01} that
\begin{equation}\label{eq:05}\sum_{i=0}^n\tilde c_{s_j,i}s_j^{2i}\left(\big\|\nabla\|\boldsymbol x\|_K\big\|_2^{2i}+O(s_j^{-1})\right)=0.\end{equation}
Denote by $\gamma_j\colonequals\left[\tilde c_{s_j,0}:\tilde c_{s_j,1}s_j^{2}:\ldots:\tilde c_{s_j,n}s_j^{2n}\right]\in\mathbb{RP}^n$ the homogeneous vector of coefficients on the left-hand side of \eqref{eq:05}. Since the projective space is compact, after passing to a subsequence, we may assume without loss of generality that $\gamma_j$ converges to some $\gamma_*\in\mathbb{RP}^n$ as $j$ goes to infinity. That being so, the limiting case of \eqref{eq:05} yields
\begin{equation}\label{eq:06}\sum_{i=0}^n\gamma_{*,i}\big\|\nabla\|\boldsymbol x\|_K\big\|_2^{2i}=0.\end{equation}
Now, as $\big\|\nabla\|\boldsymbol x\|_K\big\|_2$ is continuous and satisfies a polynomial equation with constant coefficients, it must itself be constant, which gives rise to an eikonal equation of the form $\big\|\nabla\|\boldsymbol x\|_K\big\|_2=r^{-1}$, $\boldsymbol x\in\mathbb R^d\setminus\{\boldsymbol 0\}$, where $r>0$ (cf. \cite[Proposition~2.1]{10.2996/kmj/1138043545}). However, in this special case, for $\boldsymbol x\in\partial K$ we may simply write
\begin{equation}\tag*{\cite[(1.39)]{schneider_2013}}\nabla\|\boldsymbol x\|_K=h_K(u_K(\boldsymbol x))^{-1}u_K(\boldsymbol x),\end{equation}
where $u_K:\partial K\to\mathbb S^{d-1}$ is the spherical image map of $K$ \cite[\S 2.5]{schneider_2013} and $h_K:\mathbb R^d\to\mathbb R$ is the support function of $K$ \cite[\S 1.7.1]{schneider_2013}. Hence $h_K(\boldsymbol u)=r$ for all outer unit normal vectors $\boldsymbol u$ in the spherical image of $K$. But since $u_K$ is surjective, it follows that
$$K=\{\boldsymbol x\in\mathbb R^d\mid\langle\boldsymbol x,\boldsymbol u\rangle\leq h_K(\boldsymbol u)\ \text{for all $\boldsymbol u\in\mathbb S^{d-1}$}\}=\{\boldsymbol x\in\mathbb R^d\mid\langle\boldsymbol x,\boldsymbol u\rangle\leq r\ \text{for all $\boldsymbol u\in\mathbb S^{d-1}$}\}=\mathbb B^d(r),$$
whence $K$ is indeed a Euclidean ball.\hfill$\square$
\end{claim}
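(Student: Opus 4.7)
The plan is to argue by contradiction: assume there exists an infinite increasing sequence $(s_j)_{j\in\mathbb N}$ with $P_{s_j}=0$ for every $j$, and show that this forces $K$ to be a Euclidean ball. Under this assumption, equation \eqref{eq:04} collapses to the differential identity
\[
\sum_{i=0}^n \tilde c_{s_j,i}\, \tilde Q_i^{-d+2s_j+2i+k}\bigl(\{\partial^{\boldsymbol\beta}\|\boldsymbol x\|_K : |\boldsymbol\beta|\leq 2i\}\bigr) = 0
\]
on $\mathbb R^d\setminus\{\boldsymbol 0\}$, so the main task becomes extracting the leading-order behaviour in $s_j$ via \thref{clm:01}.

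The second assertion of \thref{clm:01} identifies the top-order term of $\tilde Q_i^{-d+2s_j+2i+k}$ as $s_j^{2i}\bigl\|\nabla\|\boldsymbol x\|_K\bigr\|_2^{2i}$, with a remainder of order $s_j^{2i-1}$. To extract a clean limit despite the fact that different summands grow at different rates, I would rescale the coefficients and pass to the projective point $\gamma_j\colonequals\bigl[\tilde c_{s_j,0} : \tilde c_{s_j,1}s_j^2 : \cdots : \tilde c_{s_j,n}s_j^{2n}\bigr]\in\mathbb{RP}^n$. Compactness of $\mathbb{RP}^n$ allows a subsequence with $\gamma_j\to\gamma_*\in\mathbb{RP}^n$, and after dividing by the appropriate power of $s_j$ the $O(s_j^{-1})$ remainders vanish in the limit, yielding the non-trivial polynomial identity
\[
\sum_{i=0}^n \gamma_{*,i}\bigl\|\nabla\|\boldsymbol x\|_K\bigr\|_2^{2i} = 0 \qquad \text{on } \mathbb R^d\setminus\{\boldsymbol 0\}.
\]

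Since $\bigl\|\nabla\|\boldsymbol x\|_K\bigr\|_2$ is continuous on the connected set $\mathbb R^d\setminus\{\boldsymbol 0\}$ and satisfies a non-trivial polynomial equation with constant coefficients, its range is finite and therefore a singleton, say $\{r^{-1}\}$ with $r>0$. To convert this eikonal condition into geometric information about $K$, I would restrict to $\boldsymbol x\in\partial K$ and use the Schneider-type identity $\nabla\|\boldsymbol x\|_K = h_K(u_K(\boldsymbol x))^{-1}u_K(\boldsymbol x)$ to conclude that $h_K(u_K(\boldsymbol x))=r$ on $\partial K$. Surjectivity of the Gauss map $u_K$, which is guaranteed by the smoothness of $\partial K$, then forces $h_K\equiv r$ on $\mathbb S^{d-1}$ and hence $K=\mathbb B^d(r)$.

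The main technical obstacle I anticipate is the rigorous passage to the limit in the rescaled identity: I need the $O(s_j^{-1})$ correction implicit in \thref{clm:01} to be uniform in $\boldsymbol x$ on compact subsets of $\mathbb R^d\setminus\{\boldsymbol 0\}$, and I must rule out that, after projective rescaling, the surviving leading contributions cancel pairwise so that the limit identity degenerates. The choice of projective coordinates is exactly what pins down a distinguished non-zero limit $\gamma_*\in\mathbb{RP}^n$; the remaining check is that the leading-order coefficient indeed captures $\gamma_{*,i}\bigl\|\nabla\|\boldsymbol x\|_K\bigr\|_2^{2i}$ and is not swallowed by the error terms.
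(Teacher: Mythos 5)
Your proposal is correct and follows essentially the same route as the paper: projectivize the rescaled coefficient vector, extract a convergent subsequence in $\mathbb{RP}^n$, pass to the limit to get a non-trivial polynomial constraint on $\big\|\nabla\|\boldsymbol x\|_K\big\|_2$, deduce the eikonal equation, and then invoke Schneider's identity and surjectivity of the Gauss map. The worry you raise at the end is legitimate but resolves itself: dividing by $\max_i|\tilde c_{s_j,i}s_j^{2i}|$ makes the leading coefficients bounded with maximum modulus $1$ (so $\gamma_*\neq 0$), while each error term is $|\tilde c_{s_j,i}|s_j^{2i-1}\cdot O(1)\leq s_j^{-1}\max_i|\tilde c_{s_j,i}s_j^{2i}|\cdot O(1)$, uniformly on compacts away from the origin since the constant in the $O(\cdot)$ depends only on finitely many derivatives of $\|\cdot\|_K$, which are bounded there.
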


\begin{remark}
By repeating essentially the same argument, we may see that the left-hand side of \eqref{eq:06} restricted to $\partial K$ is a uniform limit of a certain sequence of homogeneous polynomials. However, since multivariate homogeneous polynomials are dense in the family of continuous even functions on $\partial K$ \cite{Kroo2009143}, this observation does not yield any further constraints.
\end{remark}

Note that the set of monomials (i.e., power products) depending on $\boldsymbol x$ that appear on the left-hand sides of \eqref{eq:04} is finite. In particular, the left-hand sides of \eqref{eq:04} span a finite-dimensional subspace of $C(\mathbb R^d,\mathbb R)$ and thus so do also the right-hand sides. The set of their non-trivial zero linear combinations forms an infinite system of polynomial equations for $\|\boldsymbol x\|_K^{-2}$ with polynomial coefficients. We will investigate it in the next section.

\subsection{Solving the system of polynomial equations}

Denote by $N\in\mathbb N$ the dimension of the subspace of $C(\mathbb R^d,\mathbb R)$ spanned by $\{P_s(\boldsymbol x)\|\boldsymbol x\|_K^{d-2s-k}\}_{s\geq\lceil d/2\rceil}$ and let $\|\boldsymbol x\|_K^{-2}\colonequals\zeta$. Then for any tuple of indices $(s_i)_{0\leq i\leq N}$ there exists a tuple of coefficients $(c_i)_{0\leq i\leq N}$ such that
$$\sum_{i=0}^Nc_iP_{s_i}(\boldsymbol x)\|\boldsymbol x\|_K^{d-2s_i-k}=0.$$
After dividing both sides by $\|\boldsymbol x\|_K^{d-k}$, the above equation reads
\begin{equation}\label{eq:07}\sum_{i=0}^Nc_iP_{s_i}(\boldsymbol x)\zeta^{s_i}=0,\end{equation}
which is an example of a polynomial equation for $\zeta$ with coefficients in the ring $\mathbb R[\boldsymbol x]$ of polynomials in $\boldsymbol x\in\mathbb R^d$. However, it is usually easier to consider polynomial equations over a field, of which we will soon take advantage.\\

In particular, $\zeta$ is algebraic over the field $\mathbb R(\boldsymbol x)$ of rational functions in $\boldsymbol x\in\mathbb R^d$ and thus it has a minimal polynomial of the form
$$\mu_\zeta(\lambda)=\mu_{\zeta,0}+\mu_{\zeta,1}\lambda+\ldots+\mu_{\zeta,m-1}\lambda^{m-1}+\lambda^m,$$
where $\mu_{\zeta,i}\in\mathbb R(\boldsymbol x)$ and $m=[\mathbb R(\boldsymbol x,\zeta):\mathbb R(\boldsymbol x)]$ is the degree of a field extension. Recall that the simple algebraic extension $\mathbb R(\boldsymbol x,\zeta)/\mathbb R(\boldsymbol x)$ is a finite-dimensional vector space over $\mathbb R(\boldsymbol x)$. In fact, the set $\mathcal B=\{1,\zeta,\ldots,\zeta^{m-1}\}$ is a vector space basis for $\mathbb R(\boldsymbol x,\zeta)$ over $\mathbb R(\boldsymbol x)$ (cf. \cite[Theorem~2.4.1]{roman2005field}). The multiplication map $T_\zeta:\mathbb R(\boldsymbol x,\zeta)\to\mathbb R(\boldsymbol x,\zeta)$ is an $\mathbb R(\boldsymbol x)$-linear operator on $\mathbb R(\boldsymbol x,\zeta)$ defined by $T_\zeta(\alpha)=\zeta\alpha$. The matrix of $T_\zeta$ with respect to the ordered basis $\mathcal B$ has the form
$$[T_\zeta]_{\mathcal B}=\begin{pmatrix}&&&&-\mu_{\zeta,0}\\1&&&&-\mu_{\zeta,1}\\&1&&&-\mu_{\zeta,2}\\&&\ddots&&\vdots\\&&&1&-\mu_{\zeta,m-1}\end{pmatrix}$$
and the characteristic polynomial of $[T_\zeta]_{\mathcal B}$ is precisely the minimal polynomial $\mu_\zeta$ (cf. \cite[Theorem~8.1.1]{roman2005field}). The well-known Cayley-Hamilton theorem implies that $\mu_\zeta([T_\zeta]_{\mathcal B})=0$ and therefore $f([T_\zeta]_{\mathcal B})=0$ for any polynomial $f\in\mathcal I_\zeta$, which follows from \thref{thm:03}. In particular, for every linear combination \eqref{eq:07} we have
$$\sum_{i=0}^Nc_iP_{s_i}[T_\zeta]_{\mathcal B}^{s_i}=0.$$

\begin{lemma}\thlabel{lem:02}
Let $X,Y$ be real vector spaces and let $\{\boldsymbol x_i\}_{i\in I}\subseteq X,\{\boldsymbol y_i\}_{i\in I}\subseteq Y$ be sets of vectors. Then there is a linear map $f:X\to Y$ such that $f(\boldsymbol x_i)=\boldsymbol y_i$ for each $i\in I$ if and only if for all finite subsets $J\subseteq I$ and sets of scalars $\{a_j\}_{j\in J}\subseteq\mathbb R$ such that $\sum_{j\in J}a_j\boldsymbol x_j=\boldsymbol 0$ the equality $\sum_{j\in J}a_j\boldsymbol y_j=\boldsymbol 0$ holds.
\end{lemma}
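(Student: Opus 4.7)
The plan is to prove the ``only if'' direction by a one-line application of linearity, and to prove the ``if'' direction by the standard two-step construction: first define $f$ on the linear span of $\{\boldsymbol x_i\}_{i\in I}$ by the obvious formula, and then extend it to all of $X$ using a Hamel basis of a complementary subspace.

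For the easy direction, if $f:X\to Y$ is linear with $f(\boldsymbol x_i)=\boldsymbol y_i$ and $\sum_{j\in J}a_j\boldsymbol x_j=\boldsymbol 0$, then applying $f$ to both sides immediately yields $\sum_{j\in J}a_j\boldsymbol y_j=\boldsymbol 0$. Conversely, let $V\colonequals\mathrm{span}\{\boldsymbol x_i:i\in I\}\subseteq X$. Any vector $\boldsymbol v\in V$ admits at least one representation of the form $\boldsymbol v=\sum_{j\in J}a_j\boldsymbol x_j$ with $J\subseteq I$ finite, and we would like to define $g:V\to Y$ by
\[g\!\left(\sum_{j\in J}a_j\boldsymbol x_j\right)\colonequals\sum_{j\in J}a_j\boldsymbol y_j.\]
The main issue is well-definedness: if $\sum_{j\in J}a_j\boldsymbol x_j=\sum_{j\in J'}a'_j\boldsymbol x_j$ are two representations of the same vector (possibly over different finite index sets), then after enlarging $J$ and $J'$ to their union $J\cup J'$ and padding the missing coefficients with zeros, the difference gives a vanishing linear combination of the $\boldsymbol x_j$'s. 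By hypothesis, the corresponding linear combination of $\boldsymbol y_j$'s also vanishes, hence both expressions for $g(\boldsymbol v)$ agree. Linearity of $g$ on $V$ is then immediate from the definition.

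It remains to extend $g$ to a linear map $f:X\to Y$. Invoking Zorn's lemma (or equivalently the existence of Hamel bases), we pick a basis $\mathcal B_V$ of $V$ and complete it to a Hamel basis $\mathcal B_V\cup\mathcal B_W$ of $X$, where $W\colonequals\mathrm{span}\,\mathcal B_W$ is a complement satisfying $X=V\oplus W$. We then define $f$ to be $g$ on $V$ and $0$ on $W$, and extend by linearity to all of $X$. By construction $f$ is linear and $f(\boldsymbol x_i)=g(\boldsymbol x_i)=\boldsymbol y_i$ for every $i\in I$.

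The only non-routine step is the well-definedness of $g$, but as noted above it is a direct consequence of the hypothesis, so there is no substantial obstacle; the lemma is purely structural and the proof is a clean application of the universal property of the free vector space on the indexing set $I$.
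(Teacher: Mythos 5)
Your proof is correct, and indeed the paper leaves this lemma to the reader as an easy linear-algebra exercise, so there is no in-paper argument to compare against. Your argument is the standard one: verify well-definedness on the span via the hypothesis, then extend by a Hamel basis; nothing is missing.
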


Since the proof is an easy exercise from linear algebra, we leave it to the reader.\\

Now, it follows from \thref{lem:02} that there exists an $\mathbb R$-linear map $f:C(\mathbb R^d,\mathbb R)\to\mathbb R(\boldsymbol x)^{m\times m}$ such that $f(P_s\zeta^s)=P_s[T_\zeta]_{\mathcal B}^s$ for each $s\geq\lceil d/2\rceil$. In particular, the rank-nullity theorem implies
$$\mathrm{dim}(\mathrm{span}(\{P_s[T_\zeta]_{\mathcal B}^s\}_{s\geq\lceil d/2\rceil}))=\mathrm{dim}(\mathrm{span}(\{f(P_s\zeta^s)\}_{s\geq\lceil d/2\rceil}))\leq\mathrm{dim}(\mathrm{span}(\{P_s\zeta^s\}_{s\geq\lceil d/2\rceil}))=N,$$
whence $\{P_s[T_\zeta]_{\mathcal B}^s\}_{s\geq\lceil d/2\rceil}$ span a finite-dimensional subspace of $\mathbb R(\boldsymbol x)^{m\times m}$. Therefore the problem has been reduced to a question about rational functions, to which we can now apply the theory of valued fields.

\begin{claim}\thlabel{clm:02}
Let $L/K$ be a field extension and let $v:L\to\mathbb R\cup\{\infty\}$ be a valuation on $L$, trivial on $K$. Suppose that $\{x_i\}_{i\in I}\subseteq L$ is a subset of $L$ such that $\{v(x_i)\}_{i\in I}\subseteq\mathbb R$ are pairwise different. Then $\{x_i\}_{i\in I}$ is $K$-linearly independent. For suppose that there exists a finite subset $J\subseteq I$ and a set of scalars $\{a_j\}_{j\in J}\subseteq K$ such that $\sum_{j\in J}a_jx_j=0$. Since $v(a_jx_j)=v(a_j)+v(x_j)=v(x_j)$ are again pairwise different, we have
$$\infty=v(0)=v\left(\sum_{j\in J}a_jx_j\right)=\min_{j\in J}v(x_j),$$
which reads $v(x_j)=\infty$ for all $j\in J$, a contradiction. In particular, if $\{x_i\}_{i\in I}$ span a finite-dimensional $K$-linear space, then the set $\{v(x_i)\}_{i\in I}$ is necessarily finite.\hfill$\square$
\end{claim}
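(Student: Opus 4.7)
The plan is to prove $K$-linear independence by contradiction, exploiting the ultrametric character of the valuation. The two crucial facts I would use are axiom (3), the strong triangle inequality $v(x+y)\geq\min(v(x),v(y))$, and its refinement (property 7 in the definition): if $v(x)<v(y)$, then $v(x+y)=v(x)$. Combined with the triviality of $v$ on $K$, which gives $v(a)=0$ for every $a\in K\setminus\{0\}$, any finite $K$-linear relation among the $x_i$ ought to collapse to a contradiction with $v(0)=\infty$.

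Concretely, I would suppose for contradiction that $\sum_{j\in J}a_jx_j=0$ for some finite $J\subseteq I$ and nonzero $a_j\in K$ (after discarding any vanishing terms). Then axiom (2) together with triviality on $K$ yields $v(a_jx_j)=v(a_j)+v(x_j)=v(x_j)$, and by hypothesis these values are pairwise distinct elements of $\mathbb R$. In particular there is a unique index $j_0\in J$ at which the minimum is attained.

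The only real subtlety, which I expect to be the main (though minor) obstacle, is to upgrade the equality refinement from two summands to $|J|$ summands with distinct valuations, so as to conclude $v\bigl(\sum_{j\in J}a_jx_j\bigr)=v(x_{j_0})\in\mathbb R$. I would do this by setting $y\colonequals\sum_{j\neq j_0}a_jx_j$ and observing that axiom (3), applied inductively, gives $v(y)\geq\min_{j\neq j_0}v(x_j)>v(x_{j_0})$; then property (7) applied to the two-term sum $a_{j_0}x_{j_0}+y$ yields the required equality. Since $v(x_{j_0})$ is finite while $v(0)=\infty$ by axiom (1), we reach the desired contradiction, and the $x_i$ are $K$-linearly independent.

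The ``in particular'' statement then follows immediately: if $\{x_i\}_{i\in I}$ spans a finite-dimensional $K$-subspace of $L$ of dimension $n$, the linear independence just established forces $|I|\leq n$, so the pairwise distinct values $\{v(x_i)\}_{i\in I}$ form a finite set.
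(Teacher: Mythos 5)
Your proof is correct and follows essentially the same route as the paper's: exploit $v(a_j x_j)=v(x_j)$ (triviality on $K$), observe that the valuations are pairwise distinct, and use the ultrametric inequality to conclude $v\bigl(\sum_j a_j x_j\bigr)=\min_j v(x_j)\in\mathbb R$, contradicting $v(0)=\infty$. The only differences are cosmetic: you spell out the induction that extends property (7) from two summands to finitely many (which the paper leaves implicit), and you explicitly discard zero coefficients before invoking $v(a_j)=0$, a small hygiene step the paper also glosses over.
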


Let $p\in\mathbb R[\boldsymbol x]$ be any irreducible polynomial and denote by $\hat v_p$ some extension of the $p$-adic valuation on $\mathbb R(\boldsymbol x)$ to the splitting field of $\mu_\zeta$, i.e., the smallest extension of $\mathbb R(\boldsymbol x)$ containing all eigenvalues $\lambda_1,\lambda_2,\ldots,\lambda_m$ of $[T_\zeta]_{\mathcal B}$. Now, observe that at least one of the coefficients $\mu_{\zeta,m-i}$, $i=1,2,\ldots,m$ is non-zero. Otherwise, the minimal polynomial $\mu_\zeta$ would be reducible, a contradiction. Let $\mu_{\zeta,m-i}$ be some non-zero coefficient of $\mu_\zeta$. By Vi\`ete's formulas for $\mu_\zeta$ we have
$$\mu_{\zeta,m-i}=(-1)^i\sum_{1\leq\alpha_1<\alpha_2<\ldots<\alpha_i\leq m}\lambda_{\alpha_1}\lambda_{\alpha_2}\cdots\lambda_{\alpha_i}.$$
In particular,
\begin{align*}v_p(\mu_{\zeta,m-i})&=\hat v_p\left((-1)^i\sum_{1\leq\alpha_1<\alpha_2<\ldots<\alpha_i\leq m}\lambda_{\alpha_1}\lambda_{\alpha_2}\cdots\lambda_{\alpha_i}\right)\\&\geq\min_{1\leq\alpha_1<\alpha_2<\ldots<\alpha_i\leq m}\hat v_p(\lambda_{\alpha_1}\lambda_{\alpha_2}\cdots\lambda_{\alpha_i})\\&=\min_{1\leq\alpha_1<\alpha_2<\ldots<\alpha_i\leq m}\hat v_p(\lambda_{\alpha_1})+\hat v_p(\lambda_{\alpha_2})+\ldots+\hat v_p(\lambda_{\alpha_i}),\end{align*}
whence there exists an eigenvalue $\lambda$ of $[T_\zeta]_{\mathcal B}$ such that $\hat v_p(\lambda)\leq v_p(\mu_{\zeta,m-i})/i$. Without loss of generality we may reorder the eigenvalues so that $\hat v_p(\lambda_1),\hat v_p(\lambda_2),\ldots,\hat v_p(\lambda_j)\leq\hat v_p(\lambda)$ and $\hat v_p(\lambda_{j+1}),\hat v_p(\lambda_{j+2}),\ldots,\hat v_p(\lambda_m)>\hat v_p(\lambda)$ for some $1\leq j\leq m$. Denote by
$$\chi_{[T_\zeta]_{\mathcal B}^s}(\lambda)\colonequals\det(\lambda I-[T_\zeta]_{\mathcal B}^s)$$
the characteristic polynomial of $[T_\zeta]_{\mathcal B}^s$. Since the eigenvalues of $[T_\zeta]_{\mathcal B}^s$ are precisely $\lambda_1^s,\lambda_2^s,\ldots,\lambda_m^s$, again by Vi\`ete's formulas for $\chi_{[T_\zeta]_{\mathcal B}^s}$ we have
\begin{equation}\label{eq:08}\chi_{[T_\zeta]_{\mathcal B}^s,m-j}=(-1)^j\sum_{1\leq\alpha_1<\alpha_2<\ldots<\alpha_j\leq m}\lambda_{\alpha_1}^s\lambda_{\alpha_2}^s\cdots\lambda_{\alpha_j}^s,\end{equation}
where $\chi_{[T_\zeta]_{\mathcal B}^s,m-j}$ stands for the coefficient of $\lambda^{m-j}$. This time we know, however, that $\lambda_1^s\lambda_2^s\cdots\lambda_j^s$ attains the smallest valuation among all summands on the right-hand side of \eqref{eq:08}, whence
$$v_p(\chi_{[T_\zeta]_{\mathcal B}^s,m-j})=\hat v_p(\lambda_1^s\lambda_2^s\cdots\lambda_j^s)=s(\hat v_p(\lambda_1)+\hat v_p(\lambda_2)+\ldots+\hat v_p(\lambda_j))\leq jsv_p(\mu_{\zeta,m-i})/i.$$
On the other hand, using the fact that $\chi_{[T_\zeta]_{\mathcal B}^s,m-j}$ may be computed as the sum of all principal minors of $[T_\zeta]_{\mathcal B}^s$ of size $j$ and each of those minors may itself be computed as the sum of products of certain entries of the matrix $[T_\zeta]_{\mathcal B}^s$ of length $j$, it follows that
$$v_p(\chi_{[T_\zeta]_{\mathcal B}^s,m-j})\geq jv_p([T_\zeta]_{\mathcal B}^s),$$
where $v_p(M)$ denotes the minimum of valuations taken over all entries of a matrix $M$. Chaining those two inequalities yields
$$v_p([T_\zeta]_{\mathcal B}^s)\leq sv_p(\mu_{\zeta,m-i})/i$$
and consequently
$$v_p(P_s[T_\zeta]_{\mathcal B}^s)\leq v_p(P_s)+sv_p(\mu_{\zeta,m-i})/i.$$
Finally, by \thref{clm:02} we have $v_p(P_s[T_\zeta]_{\mathcal B}^s)=O(1)$, which implies
\begin{equation}\label{eq:09}v_p(P_s)\geq-sv_p(\mu_{\zeta,m-i})/i+O(1).\end{equation}

\begin{claim}\thlabel{clm:05}
Since $\zeta$ is homogeneous of degree $-2$, the coefficient $\mu_{\zeta,m-i}$ is homogeneous of degree $-2i$ for every $1\leq i\leq m$. Indeed, for every $t\neq 0$ and $x\in\mathbb R^d$ we have
\begin{align*}0&=t^{-2m}\mu_\zeta(\zeta(\boldsymbol x))\\&=t^{-2m}\mu_\zeta(t^2\zeta(t\boldsymbol x))\\&=t^{-2m}\mu_{\zeta,0}(x)+t^{-2m+2}\mu_{\zeta,1}(\boldsymbol x)\zeta(t\boldsymbol x)+\ldots+t^{-2}\mu_{\zeta,m-1}(\boldsymbol x)\zeta(t\boldsymbol x)^{m-1}+\zeta(t\boldsymbol x)^m\\&=t^{-2m}\mu_{\zeta,0}(t^{-1}\boldsymbol y)+t^{-2m+2}\mu_{\zeta,1}(t^{-1}\boldsymbol y)\zeta(\boldsymbol y)+\ldots+t^{-2}\mu_{\zeta,m-1}(t^{-1}\boldsymbol y)\zeta(\boldsymbol y)^{m-1}+\zeta(\boldsymbol y)^m,\end{align*}
where $\boldsymbol y\colonequals t\boldsymbol x$. Recall that $\mu_\zeta$ is the unique monic polynomial of degree $m$ with root at $\zeta$, which implies
$$t^{-2i}\mu_{\zeta,m-i}(t^{-1}\boldsymbol y)=\mu_{\zeta,m-i}(\boldsymbol y)$$
for every $1\leq i\leq m$.\hfill$\square$
\end{claim}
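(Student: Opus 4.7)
The plan is to leverage the uniqueness of the minimal polynomial (part (ii) of \thref{thm:03}) together with the scaling behavior of the defining relation $\mu_\zeta(\zeta(\boldsymbol x))=0$. Since $\zeta(\boldsymbol x)=\|\boldsymbol x\|_K^{-2}$ is homogeneous of degree $-2$, any rescaling $\boldsymbol x\mapsto t\boldsymbol x$ will produce another monic polynomial of degree $m$ annihilating $\zeta$ whose coefficients are obtained from the $\mu_{\zeta,i}$ by an explicit $t$-power rescaling. Uniqueness will then force a coefficient-by-coefficient comparison, which is exactly the required homogeneity statement.

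Concretely, I would fix $t\neq 0$, substitute $\boldsymbol x\mapsto t\boldsymbol x$ in the identity $\sum_{i=0}^{m}\mu_{\zeta,i}(\boldsymbol x)\zeta(\boldsymbol x)^i=0$, and use $\zeta(t\boldsymbol x)=t^{-2}\zeta(\boldsymbol x)$ to pull out the scaling factors from the powers of $\zeta$, so each term acquires a factor of $t^{-2i}$. After multiplying through by $t^{2m}$ to restore the leading coefficient to $1$ and then relabeling $\boldsymbol y\colonequals t\boldsymbol x$, I obtain
$$\sum_{i=0}^{m}t^{2i-2m}\mu_{\zeta,i}(t^{-1}\boldsymbol y)\,\zeta(\boldsymbol y)^i=0,$$
which is a polynomial in $\zeta(\boldsymbol y)$ of degree exactly $m$ with coefficients in $\mathbb R(\boldsymbol y)$, monic because $\mu_{\zeta,m}=1$.

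The decisive step is to invoke uniqueness from \thref{thm:03}: since $m=[\mathbb R(\boldsymbol x,\zeta):\mathbb R(\boldsymbol x)]$ is the least degree of any polynomial over $\mathbb R(\boldsymbol x)$ annihilating $\zeta$, the newly constructed monic polynomial of the same degree $m$ must coincide with $\mu_\zeta$ term by term. Equating the coefficient of $\zeta(\boldsymbol y)^i$ and reindexing via $j=m-i$ yields $\mu_{\zeta,m-j}(\boldsymbol y)=t^{-2j}\mu_{\zeta,m-j}(t^{-1}\boldsymbol y)$, which, applied with $t$ replaced by $t^{-1}$, is precisely the assertion that $\mu_{\zeta,m-j}$ is homogeneous of degree $-2j$ for every $1\leq j\leq m$.

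The only real subtlety I anticipate is bookkeeping of exponents, namely checking that the rescaled polynomial is truly monic of degree exactly $m$ rather than dropping to a lower degree by accidental cancellation; this is automatic because $\mu_{\zeta,m}=1$ is a constant independent of $\boldsymbol x$ and hence is unaffected by the substitution $\boldsymbol x\mapsto t^{-1}\boldsymbol y$. Beyond this, the argument is a direct manipulation, and I do not foresee any genuine obstacle.
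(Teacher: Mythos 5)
Your proof is correct and follows the paper's own argument essentially verbatim: rescale the identity $\mu_\zeta(\zeta)=0$ using the homogeneity $\zeta(t\boldsymbol x)=t^{-2}\zeta(\boldsymbol x)$, normalize to obtain a monic degree-$m$ polynomial over $\mathbb R(\boldsymbol y)$ annihilating $\zeta$, and invoke the uniqueness of the minimal polynomial (\thref{thm:03}) to compare coefficients. There is a minor bookkeeping slip in the prose (after substituting $\boldsymbol x\mapsto t\boldsymbol x$, pulling out $t^{-2i}$, multiplying by $t^{2m}$, and setting $\boldsymbol y\colonequals t\boldsymbol x$, one would literally get $\sum_i t^{2m-2i}\mu_{\zeta,i}(\boldsymbol y)\,\zeta(t^{-1}\boldsymbol y)^i=0$, not the display as written), but your displayed identity is nonetheless the correct one, agrees with the paper's, and the ensuing uniqueness argument is unaffected.
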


Since $\mathbb R(\boldsymbol x)$ is the field of fractions of a unique factorization domain, there is a system of irreducible polynomials $\mathcal P\subset\mathbb R[\boldsymbol x]$ such that every non-zero element $f\in\mathbb R(\boldsymbol x)$ admits a unique representation
\begin{equation}\label{eq:17}f=u\prod_{p\in\mathcal P}p^{v_p(f)},\end{equation}
where $u\in\mathbb R\setminus\{0\}$ is invertible and the integral exponents $v_p(f)\in\mathbb Z$ are non-zero for only a finite number of elements $p\in\mathcal P$. This representation may be viewed as an analog of the \emph{product formula} for rational numbers. Computing the degree valuation of both sides of \eqref{eq:17} yields
\begin{equation}\label{eq:18}v_\infty(f)=-\sum_{p\in\mathcal P}v_p(f)\deg p.\end{equation}

Denote by $\mathcal P_{m-i}\subset\mathcal P$ the finite subset of all irreducible polynomials $p\in\mathcal P$ such that $v_p(\mu_{\zeta,m-i})\neq 0$. From our considerations so far, it follows that
\begin{align*}-d+2s+k&=-v_\infty(P_s)\\\overset{\eqref{eq:18}}&{=}\sum_{p\in\mathcal P}v_p(P_s)\deg p\\&\geq\sum_{p\in\mathcal P_{m-i}}v_p(P_s)\deg p\\\overset{\eqref{eq:09}}&{\geq}\sum_{p\in\mathcal P_{m-i}}-sv_p(\mu_{\zeta,m-i})\deg p/i+O(1)\\\overset{\eqref{eq:18}}&{=}sv_\infty(\mu_{\zeta,m-i})/i+O(1)\\\overset{\text{\thref{clm:05}}}&{=}2s+O(1)\end{align*}
for every non-zero $\mu_{\zeta,m-i}$. Hence all the inequalities used above actually must be equalities up to some bounded error term. Thus
\begin{equation}\label{eq:10}v_p(P_s)=-sv_p(\mu_{\zeta,m-i})/i+O(1)\end{equation}
for every $p\in\mathcal P$. Now, observe that $\mu_{\zeta,0}\neq 0$, because otherwise $\mu_\zeta$ would be reducible. That being so, for every $p\in\mathcal P$ and every non-zero $\mu_{\zeta,m-i}$ we have
$$-sv_p(\mu_{\zeta,m-i})/i+O(1)=v_p(P_s)=-sv_p(\mu_{\zeta,0})/m+O(1),$$
whence asymptotically (as $s$ goes to infinity) we get
$$v_p(\mu_{\zeta,m-i})/i=v_p(\mu_{\zeta,0})/m.$$
By the product formula \eqref{eq:17}, it means that $\mu_{\zeta,m-i}$ and $\mu_{\zeta,0}^{i/m}$ are associated, i.e.,
$$\mu_{\zeta,m-i}=u_{m-i}\mu_{\zeta,0}^{i/m}$$
for some unit $u_{m-i}\in\mathbb R\setminus\{0\}$. On the other hand, if $\mu_{\zeta,m-i}=0$, the same equality holds if we simply put $u_{m-i}\colonequals 0$. Thus
\begin{align*}0&=\mu_{\zeta,0}^{-1}\mu_\zeta(\zeta)\\&=u_0+u_1\mu_{\zeta,0}^{-1/m}\zeta+\ldots+u_{m-1}\mu_{\zeta,0}^{-(m-1)/m}\zeta^{m-1}+\mu_{\zeta,0}^{-1}\zeta^m\\&=u_0+u_1\left(\mu_{\zeta,0}^{-1/m}\zeta\right)+\ldots+u_{m-1}\left(\mu_{\zeta,0}^{-1/m}\zeta\right)^{m-1}+\left(\mu_{\zeta,0}^{-1/m}\zeta\right)^m\end{align*}
is a polynomial equation with constant coefficients satisfied by $\mu_{\zeta,0}^{-1/m}\zeta$, which therefore must itself be constant, equal to some $r\in\mathbb R$. Finally, since $v_p(P_s)\geq 0$ for every $p\in\mathcal P$, it follows from \eqref{eq:10} that $v_p(\mu_{\zeta,0})\leq 0$, which means that
\begin{equation}\label{eq:12}\|\boldsymbol x\|_K=\zeta^{-1/2}=(r^m\mu_{\zeta,0})^{-1/(2m)}\end{equation}
is a root of order $2m$ of some homogeneous polynomial $(r^m\mu_{\zeta,0})^{-1}$ of degree $2m$.

\subsection{Filtering out the incidental solutions}

Although \eqref{eq:12} already imposes a rigid structure on the Minkowski functional $\|\boldsymbol x\|_K$, it does not solve the problem immediately. Indeed, this condition is satisfied, e.g., when $K$ is the unit ball in $\ell^d_{2m}$. Moreover, for every Minkowski functional satisfying \eqref{eq:12} we can easily find a sequence of polynomials $P_s$ such that $\{P_s(\boldsymbol x)\|\boldsymbol x\|_K^{d-2s-k}\}_{k\in\mathbb N}$ span a finite-dimensional subspace of $C(\mathbb R^d,\mathbb R)$. Therefore we need to go back to the very beginning of our argument to get the desired contradiction.\\

Slightly abusing the notation, let $\|\boldsymbol x\|_K\colonequals\zeta$ with $\zeta^{2m}\colonequals h(\boldsymbol x)$ being a homogeneous polynomial of degree $2m$. Without loss of generality, we may assume that $h$ is not a perfect power. Then $A\colonequals\mathbb R(\boldsymbol x,\zeta)$ may be viewed as a graded algebra
$$A=\bigoplus_{i\in C_{2m}}A_i,\quad A_i\colonequals\mathbb R(\boldsymbol x)\zeta^i,$$
where the index set is the cyclic group $C_{2m}$.

\begin{claim}\thlabel{clm:03}
The Laplace operator $\Delta$ defines a graded endomorphism of $A$, i.e., for every function $f\in A_i$, $i\in C_{2m}$, we have $\Delta f\in A_i$. Indeed, for $f\colonequals gh^\nu$, where $g\in\mathbb R(\boldsymbol x)$ and $\nu\colonequals\frac{i}{2m}$, we have
\begin{align}\nonumber\Delta f&=\Delta gh^\nu+2\nu\nabla g\cdot\nabla hh^{\nu-1}+\nu g\Delta hh^{\nu-1}+\nu(\nu-1)g\nabla h\cdot\nabla hh^{\nu-2}\\\label{eq:14}&=\left(\Delta g+2\nu\nabla g\cdot\nabla hh^{-1}+\nu g\Delta hh^{-1}+\nu(\nu-1)g\nabla h\cdot\nabla hh^{-2}\right)h^\nu.\end{align}
Clearly the expression in parentheses is again an element of $\mathbb R(\boldsymbol x)$.\hfill$\square$
\end{claim}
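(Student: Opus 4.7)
The plan is to carry out a direct computation using the Leibniz rule for $\Delta$ together with the chain rule for fractional powers of $h$. The only real work is bookkeeping: I need to pull a single factor of $h^\nu$ out in front and then check that everything remaining sits in $\mathbb R(\boldsymbol x)$.

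Concretely, given $f \in A_i$, I would write $f = g\zeta^i$ with $g \in \mathbb R(\boldsymbol x)$ and, using the defining relation $\zeta^{2m} = h$, rewrite $\zeta^i = h^\nu$ with $\nu \colonequals i/(2m)$. Applying the product rule for the Laplacian gives
$$\Delta(g h^\nu) = h^\nu\,\Delta g + 2\,\nabla g \cdot \nabla(h^\nu) + g\,\Delta(h^\nu),$$
and the chain rule then yields $\nabla(h^\nu) = \nu h^{\nu-1}\nabla h$ and $\Delta(h^\nu) = \nu h^{\nu-1}\Delta h + \nu(\nu-1)h^{\nu-2}\|\nabla h\|_2^2$. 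Substituting and collecting a single $h^\nu$ from every term, I expect
$$\Delta f = \bigl(\Delta g + 2\nu\, h^{-1}\nabla g \cdot \nabla h + \nu g\, h^{-1}\Delta h + \nu(\nu-1)\,g\,h^{-2}\|\nabla h\|_2^2\bigr)\,h^\nu.$$

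To conclude $\Delta f \in A_i$, it suffices to observe that the parenthesised factor is an element of $\mathbb R(\boldsymbol x)$: $g$ is rational, $h \in \mathbb R[\boldsymbol x]$, and partial derivatives of polynomials and rational functions remain polynomial/rational, so $\Delta g$, $\nabla g \cdot \nabla h$, $\Delta h$, and $\|\nabla h\|_2^2$ all lie in $\mathbb R(\boldsymbol x)$. Dividing by the non-zero polynomial $h$ (or $h^2$) stays inside its field of fractions. Hence $\Delta f = R(\boldsymbol x)\,h^\nu \in \mathbb R(\boldsymbol x)\,\zeta^i = A_i$, and extending to arbitrary elements of $A_i$ by $\mathbb R$-linearity of $\Delta$ finishes the argument.

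I do not anticipate a genuine obstacle here; the claim is a direct application of the Leibniz and chain rules. The one conceptual point worth flagging is that the argument relies crucially on $h$ being a genuine polynomial, since the coefficients $h^{-1}\Delta h$ and $h^{-2}\|\nabla h\|_2^2$ must land back in $\mathbb R(\boldsymbol x)$; this is precisely what the defining relation $\zeta^{2m} = h \in \mathbb R[\boldsymbol x]$ guarantees, and is what makes the $C_{2m}$-grading compatible with the differential operator $\Delta$.
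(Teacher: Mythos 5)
Your argument is correct and follows essentially the same route as the paper: apply the Leibniz rule for $\Delta$ to $gh^\nu$, use the chain rule to compute $\nabla(h^\nu)$ and $\Delta(h^\nu)$, factor out $h^\nu$, and note the remaining factor lies in $\mathbb R(\boldsymbol x)$. Your remark that the argument hinges on $h$ being a polynomial so that $h^{-1}\Delta h$ and $h^{-2}\nabla h\cdot\nabla h$ land back in $\mathbb R(\boldsymbol x)$ is a fair observation, but it is the same computation as in the paper.
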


\begin{remark}
Furthermore, it follows from \eqref{eq:14} that $v_p(\Delta f)=v_p(f)-2$ for any generic irreducible polynomial $p\in\mathbb R[\boldsymbol x]$, unless $v_p(f)\in\{0,1\}$. Indeed, for $f\colonequals gp^\nu$, where $g\in\mathbb R(\boldsymbol x)$ is not divisible by $p$ and $\nu\in\mathbb Z$, we have
$$\Delta f=\left(\Delta gp^2+2\nu\nabla g\cdot\nabla pp+\nu g\Delta pp+\nu(\nu-1)g\nabla p\cdot\nabla p\right)p^{\nu-2}.$$
Now, the expression in parentheses is generally not divisible by $p$ unless $p\mid\nabla p\cdot\nabla p$. In other words, the dual variety of the projective hypersurface defined by $p$ is contained in the standard hyperquadric. In particular, for $d\leq 3$ there are no such polynomials $p$ with $\deg p>2$, but already for $d=4$ we have e.g.
\begin{equation}\label{eq:15}\begin{aligned}&4 x_0^8+28 x_0^6 x_1^2+16 x_0^6 x_2^2-20 x_0^6 x_3^2+73 x_0^4 x_1^4+124 x_0^4 x_1^2 x_2^2-90 x_0^4 x_1^2 x_3^2-8 x_0^4 x_2^4+60 x_0^4 x_2^2 x_3^2+33 x_0^4 x_3^4\\
&\quad +84 x_0^2 x_1^6+270 x_0^2 x_1^4 x_2^2-124 x_0^2 x_1^4 x_3^2+180 x_0^2 x_1^2 x_2^4+140 x_0^2 x_1^2 x_2^2 x_3^2+60 x_0^2 x_1^2 x_3^4-48 x_0^2 x_2^6\\
&\quad -124 x_0^2 x_2^4 x_3^2-90 x_0^2 x_2^2 x_3^4-20 x_0^2 x_3^6+36 x_1^8+180 x_1^6 x_2^2-48 x_1^6 x_3^2+297 x_1^4 x_2^4+180 x_1^4 x_2^2 x_3^2-8 x_1^4 x_3^4\\
&\quad +180 x_1^2 x_2^6+270 x_1^2 x_2^4 x_3^2+124 x_1^2 x_2^2 x_3^4+16 x_1^2 x_3^6+36 x_2^8+84 x_2^6 x_3^2+73 x_2^4 x_3^4+28 x_2^2 x_3^6+4 x_3^8,
\end{aligned}\end{equation}
obtained as the dual variety to the complete intersection of two hyperquadrics defined by $x_0^2+x_1^2+x_2^2+x_3^2$ and $x_1^2+2x_2^2+3x_3^2$. The above example is fairly complicated, and unfortunately, this will always be the case as long as we believe in the celebrated Hartshorne's conjecture (cf. \cite{Ein1986}). Nevertheless, we showed that $v_p(\Delta f)=v_p(f)-2$ unless $p\mid\nabla p\cdot\nabla p$, when $v_p(\Delta f)\geq v_p(f)-1$. The question of whether inequality may be replaced with equality seems to be hard, and likely the answer in that generality will be negative.
\end{remark}

Having said all that, we are ready to finish the proof. Firstly, suppose that $d-k$ is odd. Going back as far as \eqref{eq:13}, for every $s\in\mathbb N$ there exist scalars $c_{s,0},c_{s,1},\ldots,c_{s,n}$, not all zero, such that
$$\sum_{i=0}^nc_{s,i}V_{K,H}^{(2(ms+\frac{d-k+1}{2})+2mi+k)}(0)=0,$$
whence the distribution
$$\sum_{i=0}^n\tilde c_{s,i}\Delta^{mi}\|\boldsymbol x\|_K^{2ms+2mi+1}=\sum_{i=0}^n\tilde c_{s,i}\Delta^{mi}h^{s+i+\frac{1}{2m}}=\sum_{i=0}^n\tilde c_{s,i}\Delta^{mi}[h^{s+i}\zeta]$$
is a polynomial (cf. \eqref{eq:19}), i.e., the element of $A_0$. On the other hand, by \thref{clm:03}, it is also an element of $A_1\neq A_0$, in which case it must be zero. It follows from \thref{clm:04} that $K$ is a Euclidean ball.\\

Secondly, suppose that $d-k$ is even and $m>1$. Then for every $s\in\mathbb N$ there exist scalars $c_{s,0},c_{s,1},\ldots,c_{s,n}$, not all zero, such that
$$\sum_{i=0}^nc_{s,i}V_{K,H}^{(2(ms+\frac{d-k+2}{2})+2mi+k)}(0)=0,$$
whence the distribution
$$\sum_{i=0}^n\tilde c_{s,i}\Delta^{mi}\|\boldsymbol x\|_K^{2ms+2mi+2}=\sum_{i=0}^n\tilde c_{s,i}\Delta^{mi}h^{s+i+\frac{2}{2m}}=\sum_{i=0}^n\tilde c_{s,i}\Delta^{mi}[h^{s+i}\zeta^2]$$
is a polynomial, i.e., the element of $A_0$. On the other hand, this time it is also an element of $A_2\neq A_0$, in which case it must be zero anyway. Again, it follows that $K$ is a Euclidean ball.\\

Finally, suppose that $d-k$ is even and $m=1$. But then $K$ defined by \eqref{eq:16} is a hyperquadric, which concludes the proof.
\end{proof}

\begin{remark}
Note that, in fact, we proved a much stronger theorem. Indeed, we did not take full advantage of assumption \eqref{eq:01} (formulated in terms of the space of germs), but instead, we used only its infinitesimal version \eqref{eq:02} (formulated in terms of the space of jets). The latter, in general, contains less information unless we restrict ourselves to analytic functions when the two coincide. However, in order to avoid further complicating an already complicated assumption and thus overshadowing the main idea, we have deliberately abandoned the formally weaker formulation in favor of a much simpler and more intuitive one.
\end{remark}

\section*{Acknowledgements}

We would like to thank Prof. M. Wojciechowski for offering many valuable comments, including the remark on the superposition theorem of Kolmogorov, and Prof. J. Buczyński for pointing out the counterexample \eqref{eq:15} and his eagerness to discuss the algebraic geometry behind the proof.


\begin{thebibliography}{99}
	
	


\bibitem{agranovsky2018complex}
 {M. Agranovsky},
 {\it On polynomially integrable domains in {E}uclidean spaces}, {Complex Analysis and Dynamical Systems: New Trends and Open Problems} (M. Agranovsky,  A. Golberg,  F. Jacobzon, D. Shoikhet,   and L. Zalcman, eds.),
 {Trends in Mathematics},
{Springer International Publishing},
{2018}, pp. {1--22}.	

 
\bibitem{inbook}
 M. Agranovsky,  
 {\it On algebraically integrable bodies},
 	 Functional Analysis and Geometry: Selim Grigorievich Krein Centennial, Contemporary Mathematics, {vol. 733}, American Mathematical Society, 2019,
  33--44.
 
 
\bibitem{Agranovsky2020}
 {M. Agranovsky},
 {\it Locally polynomially integrable surfaces and finite stationary phase expansions},
  {Journal d'Analyse Math\'ematique} {\bf 141}	 (2020),
 	 {23--47}.
  
  
  
\bibitem{Agranovsky2022}
  {M. Agranovsky},
 {\it Domains with algebraic {X}-ray transform},
   {Analysis and Mathematical Physics} {\bf 12} (2022), no. 2, Paper No. 60, 17 pp.
 
  
  
	
\bibitem{AGRANOVSKY2023127071}
{M. Agranovsky, A. Koldobsky, D. Ryabogin, and V. Yaskin},		 {\it An analogue of polynomially integrable bodies in even-dimensional spaces},
		  {Journal of Mathematical Analysis and Applications} (2023),
	  {127071}.
	 
	  
\bibitem{arnold2004arnold}
	 {V. I. Arnold},	 {\it Arnold's problems},  {Springer Berlin Heidelberg}, {2004}.
	 
	
	
\bibitem{Boman2021}
	 {J. Boman},	 
	{\it A hypersurface containing the support of a {R}adon Transform must be an ellipsoid. {I}: {T}he symmetric case},
	 {Journal of Geometric Analysis} {\bf 31} (2021),  {2726--2741}.
 
\bibitem{Boman+2021+351+367}
{J. Boman},  {\it A hypersurface containing the support of a {R}adon transform must be an ellipsoid. {II}: {T}he general case},  
 {Journal of Inverse and Ill-posed Problems} {\bf 29} (2021), no. 3,
	 {351--367}.
	 
\bibitem{Ein1986}
  {L. Ein},
	{\it Varieties with small dual varieties, {I}},	  {Inventiones mathematicae} {\bf 86} (1986), {63--74}.
 
	 	 
\bibitem{engler2005valued}
	  {A. J. Engler  and A. Prestel},	  {\it Valued fields},
 {Springer Monographs in Mathematics}, {Springer Berlin Heidelberg},
	  {2005}.

\bibitem{koldobsky2005fourier}
{A.~Koldobsky}, {\it Fourier analysis in convex geometry},
{Mathematical surveys and monographs}, American Mathematical Society
 {2005}.
 
\bibitem{KOLDOBSKY2017876}
 {A. Koldobsky, A. S. Merkurjev, and V. Yaskin},  {\it On polynomially integrable convex bodies},
 {Advances in Mathematics}
  {\bf 320} (2017),   {876--886}.
  
  
  
 \bibitem{MR0111809}
   {A. N. Kolmogorov},
  {\it On the representation of continuous functions of many
  		variables by superposition of continuous functions of one
  		variable and addition},
  {Doklady Akademii Nauk SSSR}
  	  {\bf114} (1957),
  {953--956}.
  
\bibitem{Kroo2009143}
{A. Kroó  and J. Szabados},
 {\it On the density of homogeneous polynomials on regular convex surfaces}, {Acta Scientiarum Mathematicarum} {\bf 75} (2009), no. 1-2, 
    {143--159}.
 
\bibitem{newton1723philosophiae} I. Newton, {\it Philosophiae naturalis principia mathematica},
  London,
 {1687}.
 
 
 
 \bibitem{Ovall2016TheLA}
{J. S. Ovall}, 	 {\it The {L}aplacian and mean and extreme values}, 
 {The American Mathematical Monthly} {\bf 123}
 	(2016),
  {287--291}.
 	
 
 
 \bibitem{roman2005field}
 {S. Roman}, {\it Field theory},
  {Graduate Texts in Mathematics}, {Springer New York},
  {2005}.
 
 
 
\bibitem{rudin1991functional}
 {W. Rudin}, {\it Functional analysis},
{International series in pure and applied mathematics},  {McGraw-Hill},
  {1991}.
 	
 	 
 	
 	
 	
\bibitem{10.2996/kmj/1138043545}
{T. Sakai},
 	 {\it On Riemannian manifolds admitting a function whose gradient is of constant norm},
 	{Kodai Mathematical Journal} {\bf 19} (1996), 
 		no. {1},
 		 {39--51}.
 	 
\bibitem{schneider_2013} 
R. Schneider, {\it Convex bodies: The {B}runn–{M}inkowski theory}, 2 ed., {Encyclopedia of Mathematics and its Applications}, {Cambridge University Press},
 {2013}.
 
 
 
 
\bibitem{https://doi.org/10.1112/blms/bdv002}
  {V. A. Vassiliev},
  {\it Newton's lemma {XXVIII} on integrable ovals in higher dimensions and reflection groups},
  {Bulletin of the London Mathematical Society} {\bf 47} {(2015)},
 	no. {2},
   {290--300}.
   
   
   
  \bibitem{article1}
  {V. A. Vassiliev},
 {\it Lacunas and local algebraicity of volume functions},
 {Journal of Singularities} {\bf 18} (2018), {350--357}.
   
   
 \bibitem{Vassiliev2019}
  {V. A. Vassiliev},
    {\it New examples of locally algebraically integrable bodies},
   	 {Mathematical Notes} {\bf 106} (2019),
    {894--898}.
   
   
  \bibitem{Vassiliev2020}
  {V. A. Vassiliev},
{\it Algebroidally integrable bodies},
   	 {Arnold Mathematical Journal} {\bf 6} (2020),
   	 {291--309}.
   
   
   
   
\bibitem{MR0165138}
{A. G. Vitushkin},
 {\it A proof of the existence of analytic functions of several
   		variables not representable by linear superpositions of
   		continuously differentiable functions of fewer variables},
    {Doklady Akademii Nauk SSSR}  {\bf 156} (1964),
    {1258--1261}.
   	
 
\end{thebibliography}

\end{document}